\numberwithin{equation}{section}
\newtheorem{theorem}{Theorem}[section]
\newtheorem{lemma}[theorem]{Lemma}
\newtheorem{corollary}[theorem]{Corollary}
\newtheorem{remark}[theorem]{Remark}
\newtheorem{proposition}[theorem]{Proposition}
\newtheorem{definition}[theorem]{Definition}
\newcommand{\dd}{\mathrm{d}}
\renewcommand{\d}{\,\mathrm{d}}
\newcommand{\var}{\text{-}\mathrm{var}}
\newcommand{\R}{\mathbb{R}}
\newcommand{\N}{\mathbb{N}}
\newcommand{\X}{{\bf X}}
\newcommand{\1}{\mathbf{1}}
\newcommand{\F}{\mathcal{F}}
\newcommand{\E}{\mathbb{E}}
\renewcommand{\P}{\mathbb{P}}
\title{Examples of It\^o c\`adl\`ag rough paths}
\author[Liu]{Chong Liu}
\address{Chong Liu, Eidgen\"ossische Technische Hochschule Z\"urich, Switzerland}
\email{chong.liu@math.ethz.ch}
\author[Pr\"omel]{David J. Pr\"omel}
\address{David J. Pr\"omel, University of Oxford, United Kingdom}
\email{proemel@maths.ox.ac.uk}
\date{\today}
\begin{document}

\begin{abstract}
  Based on a dyadic approximation of It\^o integrals, we show the existence of It\^o c\`adl\`ag rough paths above general semimartingales, suitable Gaussian processes and non-negative typical price paths. Furthermore, Lyons-Victoir extension theorem for c\`adl\`ag paths is presented, stating that every c\`adl\`ag path of finite $p$-variation can be lifted to a rough path.
\end{abstract}

\maketitle 

\noindent\textbf{Key words:} c\`adl\`ag rough paths, Gaussian processes, Lyons-Victoir extension theorem, semimartingales, typical price paths. \\
\textbf{MSC 2010 Classification:} Primary: 60H99, 60G17; Secondary: 91G99.


\section{Introduction}

Very recently, the notion of c\`adl\`ag rough paths was introduced by Friz and Shekhar \cite{Friz2017} (see also \cite{Chevyrev2017b,Chevyrev2017}) extending the well-known theory of continuous rough paths initiated by Lyons~\cite{Lyons1998}. These new developments significantly generalize an earlier work by Williams~\cite{Williams2001}. While \cite{Williams2001} already provides a pathwise meaning to stochastic differential equations driven by certain L\'evy processes, \cite{Friz2017,Chevyrev2017b} develop a more complete picture about c\`adl\`ag rough paths, including rough path integration, differential equations driven by c\`adl\`ag rough paths and the continuity of the corresponding solution maps. We refer to \cite{Lyons2007,Friz2010,Friz2014} for detailed introductions to classical rough path theory.

A c\`adl\`ag rough path is analogously defined to a continuous rough path using finite $p$-variation as required regularity, see Definition~\ref{def:rough path} and \ref{def:general rough path}, but (of course) dropping the assumption of continuity. Note that the notion of $p$-variation still works in the context of c\`adl\`ag paths without any modifications. Loosely speaking, for $p\in [2,3)$ a c\`adl\`ag rough path is a pair $(X,\mathbb{X})$ given by a c\`adl\`ag path $X\colon [0,T] \to\R^d$ of finite $p$-variation and its ``iterated integral'' 
\begin{equation}\label{eq:introduction}
  \mathbb{X}_{s,t} = ``\int_s^t (X_{r-}-X_s) \otimes \dd X_r \, ",\quad s,t\in [0,T],
\end{equation}
which satisfies Chen's relation and is of finite $p/2$-variation in the rough path sense. While the ``iterated integral'' can be easily defined for smooth paths~$X$ as for example via Young integration~\cite{Young1936}, it is a non-trivial question whether any paths of finite $p$-variation can be lifted (or enhanced) to a rough path. In the setting of continuous rough paths this question was answered affirmative by Lyons-Victoir extension theorem~\cite{Lyons2007a}. In Section~\ref{sec:extension theorem} we prove the analogous result in the context of c\`adl\`ag rough paths stating that every c\`adl\`ag path of finite $p$-variation for arbitrary non-integer $p \geq 1$ can be lifted to a rough path.

The theory of c\`adl\`ag rough paths provides a novel perspective to many questions in stochastic analysis involving stochastic processes with jumps, which play a very important role in probability theory. For a long list of successful applications of continuous rough path theory we refer to the book~\cite{Friz2014}. However, for applications of rough path theory in probability theory Lyons-Victoir extension theorem is not sufficient. Instead, it is of upmost importance to be able to lift stochastic processes to random rough paths via some type of stochastic integration. 

In Section~\ref{sec:ito rough paths} we focus on stochastic processes with sample paths of finite $p$-variation for $p\in (2,3)$, which is the most frequently used setting in probability theory, and construct the corresponding random rough paths using It\^o(-type) integration. More precisely, we define for a stochastic process~$X$ the ``iterated integral'' $\mathbb{X}$ (cf.~\eqref{eq:introduction}) as limit of approximating left-point Riemann sums, which corresponds to classical It\^o integration if $X$ is a semimartingale. The main difficulty is to show that $\mathbb{X}$ is of finite $p/2$-variation in the rough path sense. For this purpose we provide a deterministic criterion to verify the $p/2$-variation of $\mathbb{X}$ based on a dyadic approximation of the path and its iterated integral, see Theorem~\ref{thm:existence rough path}. As an application of Theorem~\ref{thm:existence rough path}, we provide the existence of It\^o c\`adl\`ag rough paths above general semimartingales (possibly perturbed by paths of finite $q$-variation), certain Gaussian processes and typical non-negative prices paths. Let us remark that related constructions of random c\`adl\`ag rough paths above stochastic processes are given in \cite{Friz2017} and \cite{Chevyrev2017b}, on which we comment in more detail in the specific subsections.  

\smallskip
\noindent{\bf Organization of the paper:} In Section~\ref{sec:extension theorem} the basic definitions and Lyons-Victoir extension theorem are presented. Section~\ref{sec:ito rough paths} provides the constructions of It\^o c\`adl\`ag rough paths.
\smallskip

\noindent{\bf Acknowledgment:} D.J.P. gratefully acknowledges financial support of the Swiss National Foundation under Grant No.~$200021\_163014$ and was affiliated to ETH Z\"urich when this project was commenced.

\section{C\`adl\`ag rough path and Lyons-Victoir extension theorem}\label{sec:extension theorem}

In this section we briefly recall the definitions of c\`adl\`ag rough path theory as very recently introduced in \cite{Friz2017,Chevyrev2017b} and present the Lyons-Victoir extension theorem in the c\`adl\`ag setting, see Proposition~\ref{prop:extension theorem}.
\smallskip

Let $D([0,T];E)$ be the space of c\`adl\`ag (right-continuous with left-limits) paths from $[0,T]$ into a metric space $(E,d)$. A partition~$\mathcal{P}$ of the interval~$[0,T]$ is a set of essentially disjoint intervals covering $[0,T]$, i.e. $\mathcal{P}=\{[t_i,t_{i+1}]\,:\, 0=t_0 < t_1<\cdots <t_n=T,\, n\in \mathbb{N}\}$. A path $X\in D([0,T];E)$ is of finite $p$-variation for $p\in (0,\infty)$ if 
\begin{equation*}
  \|X\|_{p\var} := \bigg(\sup_{\mathcal{P}} \sum_{[s,t]\in \mathcal{P}} d(X_s,X_t)^p \bigg)^{\frac{1}{p}}<\infty,
\end{equation*}
where the supremum is taken over all partitions $\mathcal{P}$ of the interval $[0,T]$ and the sum denotes the summation over all intervals $[s,t]\in \mathcal{P}$. The space of all c\`adl\`ag paths of finite $p$-variation is denoted by $D^{p\var}([0,T];E)$. For a two-parameter function $\mathbb{X}\colon \Delta_T \to \R^{d\times d}$ we define 
\begin{equation}\label{eq:p/2 variation}
  \|\mathbb{X}\|_{p/2\var} := \bigg(\sup_{\mathcal{P}} \sum_{[s,t]\in \mathcal{P}} |\mathbb{X}_{s,t}|^{\frac{p}{2}} \bigg)^{\frac{2}{p}},\quad p\in (0,\infty),
\end{equation}
where $\Delta_T :=\{ (s,t)\in [0,T]\,:\, s\leq t\}$ and $d\in\N$. Furthermore, we use the shortcut $X_{s,t}:= X_t-X_s$ for $X\in  D([0,T];\R^d)$.
\smallskip

For $p\in [2,3)$ the fundamental definition of a c\`adl\`ag rough path was introduced in \cite[Definition~12]{Friz2017} and reads as follows.

\begin{definition}\label{def:rough path}
  For $p \in [2, 3)$, a pair $\X = (X, \mathbb{X})$ is called \emph{c\`adl\`ag rough path} over $\R^d$  (in symbols $\X \in \mathcal{W}^p([0,T];\R^d)$) if $X\colon [0,T] \to \R^d$ and $\mathbb{X}\colon \Delta_T \to \R^{d\times d}$ satisfy:
  \begin{enumerate}[leftmargin=0.8cm]
    \item Chen's relation holds: $\mathbb{X}_{s,t} - \mathbb{X}_{s,u} - \mathbb{X}_{u,t} = X_{s,u} \otimes X_{u,t}$ for $0\leq s\leq u\leq t\leq T$.
    \item The map $ [0,T] \ni t \mapsto X_{0,t} + \mathbb{X}_{0,t}\in \R^d\times \R^{d\times d}$ is c\`adl\`ag.
    \item $\X = (X,\mathbb{X})$ is of finite $p$-variation in the rough path sense, i.e. $\|X\|_{p\var}+\|\mathbb{X}\|_{p/2\var}<\infty$.
  \end{enumerate} 
\end{definition}

An important subclass of rough paths are the so-called weakly geometric rough paths: For $N\geq 1$ let $G^N(\R^d) \subset T^N(\R^d) := \sum_{k=0}^N (\R^d)^{\otimes k}$ be the step-$N$ free nilpotent Lie group over~$\R^d$, embedded into the truncated tensor algebra $(T^N(\R^d),+,\otimes)$ which is equipped with the Carnot-Carath\'eodory norm $\|\cdot\|$ and the induced (left-invariant) metric~$d$. For more details we refer to \cite[Chapter~7]{Friz2010}. A rough path $\X= (X, \mathbb{X}) \in \mathcal{W}^p([0,T];\R^d)$ for $p\in [2,3)$ is said to be a weakly geometric rough path if $1 + X_{0,t} + \mathbb{X}_{0,t}$ takes values in $G^2(\R^d)$.
 
Note, while the constructions of rough paths carried out in Section~\ref{sec:ito rough paths} lead in general to non-geometric rough paths, it is always possible to recover a weakly geometric one.

\begin{remark}
  If $N=2$ and $p\in [2,3)$, one can easily verify that if $\X = (X, \mathbb{X})$ is a c\`adl\`ag rough path, then there exists a c\`adl\`ag function $F\colon [0,T] \to \R^{d\times d}$ of finite $p/2$-variation such that $1 + X_{0,t} + \mathbb{X}_{0,t} + F_t$ is a weakly geometric rough path, cf.~\cite[Exercise~2.14]{Friz2014}. 
\end{remark}

The notion of weakly geometric rough paths naturally extends to arbitrary low regularity $p\in [1,\infty)$, see \cite[Definition~2.2]{Chevyrev2017b}.

\begin{definition}\label{def:general rough path}
  Let $1\leq p < N+1$ and $N\in \N$. Any $\X \in D^{p\var}([0,T];G^N(\R^d))$ is called \emph{weakly geometric c\`adl\`ag rough path} over~$\R^d$.
\end{definition}

The next proposition is the Lyons-Victoir extension theorem (see in particular \cite[Corollary~19]{Lyons2007a}) in the context of c\`adl\`ag rough paths.

\begin{proposition}\label{prop:extension theorem}
  Let $p \in [1,\infty) \setminus \{2,3,\dots\}$ and $N\in \N$ be such that $ p < N+1$. For every c\`adl\`ag path $X\colon [0,T]\to \R^d$ of finite $p$-variation there exists a (in general non-unique) weakly geometric c\`adl\`ag rough path~$\X \in D^{p\var}([0,T];G^N(\R^d))$ such that $\pi_1(\X) = X$, where $\pi_1 \colon G^{N}(\R^d) \to \R^d$ is the canonical projection onto the first component.
\end{proposition}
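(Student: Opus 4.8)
The plan is to reduce the statement to the classical \emph{continuous} Lyons--Victoir extension theorem by first turning $X$ into a continuous path through a Marcus-type ``filling in'' of its jumps, lifting that continuous path, and then undoing the construction.

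First I would recall that a c\`adl\`ag path $X$ of finite $p$-variation has at most countably many jump times $t_1,t_2,\dots$ and that the jumps $a_k:=X_{t_k}-X_{t_k-}$ satisfy $\sum_k|a_k|^p\le\|X\|_{p\var}^p<\infty$ (test $\|X\|_{p\var}$ against partitions containing points immediately before and after finitely many $t_k$, and pass to the limit). Fixing a summable sequence $\epsilon_k>0$, I would introduce the strictly increasing, c\`adl\`ag time change $\tau\colon[0,T]\to[0,\tilde T]$, $\tilde T:=T+\sum_k\epsilon_k$, given by $\tau(t):=t+\sum_{k\,:\,t_k\le t}\epsilon_k$, which inserts an interval $I_k:=[\tau(t_k-),\tau(t_k)]$ of length $\epsilon_k$ in place of the $k$-th jump. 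On $[0,\tilde T]$ define $\tilde X$ to equal $X\circ c$ off the $I_k$, where $c$ is the continuous, non-decreasing generalised inverse of $\tau$, and to interpolate affinely from $X_{t_k-}$ to $X_{t_k}$ along $I_k$. Then $\tilde X$ is continuous with $\tilde X\circ\tau=X$, and it is of finite $p$-variation: any increment $\tilde X_{s,s'}$ is a sum of at most one increment $X_{c(s),c(s')}$ of $X$ and at most two ``partial jumps'' $\mu a_k$ with $|\mu|\le1$, while increments contained in a single $I_k$ are collinear, so their $p$-th powers telescope to at most $|a_k|^p$; combining these two observations with $|b_1+\dots+b_m|^p\le m^{p-1}\sum_j|b_j|^p$ yields a bound $\|\tilde X\|_{p\var}^p\le C_p\big(\|X\|_{p\var}^p+\sum_k|a_k|^p\big)<\infty$.

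Next I would apply the continuous Lyons--Victoir extension theorem \cite[Corollary~19]{Lyons2007a} to $\tilde X$, obtaining a continuous weakly geometric rough path $\tilde\X\in C^{p\var}([0,\tilde T];G^N(\R^d))$ with $\pi_1(\tilde\X)=\tilde X$, and then set $\X_t:=\tilde\X_{\tau(t)}$. Because $\tilde\X$ is continuous and $G^N(\R^d)$-valued and $\tau$ is increasing and c\`adl\`ag, $\X$ is a c\`adl\`ag $G^N(\R^d)$-valued path (with $\X_{t_k-}=\tilde\X_{\tau(t_k-)}$ and $\X_{t_k}=\tilde\X_{\tau(t_k)}$); since every partition of $[0,T]$ is carried by the increasing map $\tau$ to a partition of $[0,\tilde T]$, one gets $\|\X\|_{p\var}\le\|\tilde\X\|_{p\var}<\infty$, so that $\X\in D^{p\var}([0,T];G^N(\R^d))$ is a weakly geometric c\`adl\`ag rough path in the sense of Definition~\ref{def:general rough path}; and $\pi_1(\X)=\pi_1(\tilde\X)\circ\tau=\tilde X\circ\tau=X$, as required. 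The non-uniqueness is inherited from that of the continuous extension theorem.

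The step I expect to need real care is the first one: making sure the jump-filling is well posed and yields a \emph{continuous} path of \emph{finite} $p$-variation even when the jumps of $X$ are infinitely many and possibly dense. Concretely one must check that $\tilde T<\infty$ (immediate from $\sum_k\epsilon_k<\infty$), that $\tilde X$ is continuous at the $\tau$-images of accumulation points of $\{t_k\}$ — which uses that the jump sizes along any convergent sequence of jump times tend to $0$, a consequence of $\sum_k|a_k|^p<\infty$ — and that the $p$-variation bookkeeping above holds uniformly over all partitions of $[0,\tilde T]$. Once $\tilde X$ is in hand the second step is routine, since Definition~\ref{def:general rough path} asks nothing of $\X$ beyond being a c\`adl\`ag $G^N(\R^d)$-valued path of finite $p$-variation.
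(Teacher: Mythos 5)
Your argument is correct in outline and shares the paper's overall skeleton --- represent $X$ as a continuous path composed with a non-decreasing c\`adl\`ag time change, lift the continuous path by the classical Lyons--Victoir theorem, and compose back --- but it differs in how the continuous path is obtained, and the difference is worth recording. The paper does not build the continuous path by hand: it invokes the Chistyakov--Galkin decomposition $X=g\circ\varphi$ with $g$ $1/p$-H\"older and $\varphi$ non-decreasing, so the jump-filling and the regularity of the continuous factor come for free from the citation; the only extra work is to replace $\varphi$ by its right-continuous modification $\phi$ (using right-continuity of $X$ and continuity of $g$ to keep $g\circ\phi=X$), after which \cite[Corollary~19]{Lyons2007a} applies directly to the H\"older path $g$, and the passage from level $[p]$ to level $N$ is done in the c\`adl\`ag setting via \cite[Theorem~20]{Friz2017}. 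Your Marcus-type insertion of intervals with affine jump interpolation replaces that citation by an explicit construction, at the cost of the two verifications you correctly flag (continuity of $\tilde X$ at accumulation points of jump times, and the uniform $p$-variation bound $\|\tilde X\|_{p\var}^p\lesssim_p\|X\|_{p\var}^p+\sum_k|a_k|^p$); both are standard and your sketches of them are sound, including $\sum_k|a_k|^p\le\|X\|_{p\var}^p$. Two small points of precision: \cite[Corollary~19]{Lyons2007a} is stated in the H\"older scale, so applied to the merely continuous finite $p$-variation path $\tilde X$ you should either quote the $p$-variation form of the continuous extension theorem or first reparametrize $\tilde X$ by its variation function --- which is exactly the Chistyakov--Galkin step the paper uses --- and the first-level lift takes values in $G^{[p]}(\R^d)$, so reaching $G^N(\R^d)$ for general $N$ with $p<N+1$ needs one more appeal to the (continuous) Lyons extension theorem before composing with $\tau$; doing this in the continuous world is a mild advantage of your route, since the paper instead needs the c\`adl\`ag extension result of Friz--Shekhar. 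In exchange, the paper's route is shorter and its constants are hidden in the cited reparametrization theorem, while yours is more self-contained and makes the geometry of the lift across jumps (traversal of the chord) explicit.
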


\begin{proof}
  Let $X$ be a c\`adl\`ag $\R^d$-valued path of finite $p$-variation. By a slight modification of \cite[Theorem~3.1]{Chistyakov1998}, there exists a non-decreasing function $\varphi\colon [0,T] \to [0,\varphi(T)]$ with $\varphi(T)<\infty$ and a $1/p$-H\"older continuous function $g \colon [0,\varphi(T)]\to \R^d$ such that $X = g \circ \varphi$. Since $\varphi(t)$ is non-decreasing, the set $\mathcal{N}$ of discontinuity points of $\varphi$ is at most countable. Let us define a function~$\phi$ such that $\phi(t) = \varphi(t)$ for $t \in ([0,T]\setminus \mathcal{N}) \bigcup \{T\}$ and $\phi(t) = \varphi(t+) := \lim_{s \downarrow t, s \notin \mathcal{N}}\varphi(s)$ if $t \in \mathcal{N}$. It is easy to verify that $\phi$ is non-decreasing, c\`adl\`ag and $\phi(T) = \varphi(T)$. Moreover, since~$X$ is right-continuous and $g$ is continuous, we have $g \circ \phi = X$.  
  
  By \cite[Corollary~19]{Lyons2007a} there exits a weakly geometric $1/p$-H\"older continuous rough path~$\tilde{g}$ such that $\pi_1 (\tilde g)=g$. Now we define~$\tilde\X := \tilde{g} \circ \phi$. Since~$\phi$ is c\`adl\`ag and $\tilde{g}$ is continuous, $\tilde\X$ is also c\`adl\`ag. Furthermore, using \cite[Theorem~3.1]{Chistyakov1998} again we conclude that~$\tilde\X$ has finite $p$-variation and thus $\tilde\X\in D^{p\var}([0,T];G^{[p]}(\R^d))$ with $[p] := \max \{n\in \N : n \leq p\}$. Finally, it is obvious that $\pi_1(\tilde\X) = \pi_1(\tilde{g}) \circ \phi$ = $g \circ \phi = X$ and the extension of $\tilde\X$ to a weakly geometric c\`adl\`ag rough path $\X \in D^{p\var}([0,T];G^N(\R^d))$ for every $N\in\N$ with $ p < N+1$ is possible due to \cite[Theorem~20]{Friz2017}.
\end{proof}

Further conventions: The space $\R^d$ (resp. $\R^{d\times d}$) is equipped with the Euclidean norm~$|\cdot|$. For $X\in D([0,T];\R^d)$ the supremum norm is given by $\|X\|_\infty :=\sup_{t\in [0,T]} |X_t|$ and $X_{-}$ denotes the left-continuous version of $X$, i.e. $X_{-}(t):=X_{t-}:=\lim_{s\to t,\, s<t} X_s$ for $t\in (0,T]$ and $X_{-}(0):=X_{0-}:=X_0$. 
We write $A_{\vartheta}\lesssim B_{\vartheta}$ meaning that $A_{\vartheta}\leq CB_{\vartheta}$ for some constant $C>0$ independent of a generic parameter $\vartheta$ and  $A_{\vartheta}\lesssim_{\vartheta} B_{\vartheta}$ meaning that $A_{\vartheta}\leq C (\vartheta)B_{\vartheta}$ for some constant $C(\vartheta)>0$ depending on $\vartheta$. The indicator function of a set $A\subset \R$ or $A\subset D([0,T];\R^d)$ is denote by $\1_A$ and $x\wedge y := \min \{x,y\}$ for $x,y\in \R$.

\section{Construction of It\^o rough paths}\label{sec:ito rough paths}

In order to lift stochastic processes using It\^o type integration, we first prove a deterministic criterion to check the $p/2$-variation of the corresponding lift. The construction of random rough paths above (stochastic) processes is presented in the following subsections. 
\smallskip

For $X \in D([0,T];\R^d)$ or for (later) any c\`adl\`ag process~$X$, we define the dyadic (stopping) times $(\tau^n_k)_{n,k \in \N}$ by 
\begin{equation*}
  \tau^n_0 := 0\quad \text{and}\quad \tau^n_{k+1} := \inf\{ t \ge \tau^n_k \,:\, |X_t - X_{\tau^n_k}| \geq  2^{-n}\}.
\end{equation*}
Furthermore, for $t\in [0,T]$ and $n\in \N$ we introduce the dyadic approximation  
\begin{equation}\label{eq:approximation}
  X^n_t := \sum_{k=0}^\infty X_{\tau^n_k} \1_{(\tau^n_k, \tau^n_{k+1}]}(t)
  \quad \text{and}\quad 
  \int_0^t X^n_s \otimes \dd X_s := \sum_{k=0}^{\infty} X_{\tau_k^n}\otimes X_{\tau_k^n \wedge t, \tau_{k+1}^n\wedge t}. 
\end{equation}
Note that the integral $\int_0^t X^n_s \otimes \dd X_s$ is well-defined and $\|X^n - X_{-}\|_{\infty} \leq 2^{-n}$ for every $n\in\N$. 

\begin{theorem}\label{thm:existence rough path}
  Suppose that $X\in D^{p\var}([0,T];\R^d)$ for every $p > 2$ and there exist a function $\int_0^{\cdot} X_{-}\otimes \dd X \in D([0,T];\R^{d\times d})$ and a dense subset $D_T$ containing $T$ in $[0,T]$ satisfying that for every $t \in D_T$ and for every $\varepsilon \in(0,1)$, there exist an $N=N(t,\varepsilon) \in \N$ and a constant $c = c(p,\varepsilon)$ such that 
  \begin{equation}\label{eq:assumption}
    \left | \int_0^{t} X^n_s \otimes \dd X_s - \int_0^{t} X_{-}\otimes \dd X \right | \le c 2^{-n(1-\varepsilon)}\quad \text{for all }n \geq N. 
  \end{equation}  
  Setting for $(s,t) \in \Delta_T$
  \begin{equation*}
    \mathbb{X}_{s,t} := \int_s^t X_{r-}\otimes \dd X_r - X_{s}\otimes X_{s,t}:= \int_0^t X_{r-}\otimes \dd X_r-\int_0^s X_{r-}\otimes \dd X_r - X_{s}\otimes X_{s,t},
  \end{equation*}
  then $(X,\mathbb{X})\in \mathcal{W}^p([0,T];\R^d)$ for every $p \in (2,3)$.
\end{theorem}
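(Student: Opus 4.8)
The plan is to verify the three conditions of Definition~\ref{def:rough path} for the pair $(X,\mathbb{X})$, the bulk of the effort going into the finite $p/2$-variation of $\mathbb{X}$. Writing $I_\cdot:=\int_0^\cdot X_{r-}\otimes\dd X_r$, so that $\mathbb{X}_{s,t}=I_t-I_s-X_s\otimes X_{s,t}$, Chen's relation is a one-line bilinear computation: the increments of $I$ telescope and $-X_s\otimes X_{s,t}+X_s\otimes X_{s,u}+X_u\otimes X_{u,t}$ collapses to $X_{s,u}\otimes X_{u,t}$. Condition~(2) is immediate since $I$ is c\`adl\`ag by hypothesis and $X$ is c\`adl\`ag, whence $t\mapsto X_{0,t}+\mathbb{X}_{0,t}=X_{0,t}+I_t-X_0\otimes X_{0,t}$ is c\`adl\`ag. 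Since $\|X\|_{p\var}<\infty$ is assumed for every $p>2$, what remains is to show $\|\mathbb{X}\|_{p/2\var}<\infty$ for $p\in(2,3)$.

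For the set-up I would fix $p\in(2,3)$, choose $p'\in(2,p)$, and let $\omega$ be a superadditive control with $|X_{s,t}|\le\omega(s,t)^{1/p'}$ and $\omega(0,T)<\infty$, recording also that $\sum_{u\le T}|\Delta X_u|^{p'}\le\omega(0,T)$. Since $I$ and $X$ are c\`adl\`ag, $\mathbb{X}_{s,t}$ is right-continuous separately in each of $s$ and $t$, so the supremum in~\eqref{eq:p/2 variation} is unchanged when restricted to partitions with all endpoints in the dense set $D_T$; likewise, up to arbitrarily small errors, any dyadic time $\tau^n_k$ appearing below may be replaced by a point of $D_T$ just to its right. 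Setting $I^n_\cdot:=\int_0^\cdot X^n\otimes\dd X$ and $\mathbb{X}^n_{s,t}:=(I^n_t-I^n_s)-X_s\otimes X_{s,t}$, assumption~\eqref{eq:assumption} applied at $s$ and at $t$ gives $|\mathbb{X}_{s,t}-\mathbb{X}^n_{s,t}|\le 2c\,2^{-n(1-\varepsilon)}$ for $s,t\in D_T$ and $n\ge N(s,\varepsilon)\vee N(t,\varepsilon)$; moreover, since $I$ is of It\^o type (i.e. $\Delta I_u=X_{u-}\otimes\Delta X_u$), one has $\mathbb{X}_{u-,u}=0$ at every time $u$.

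The heart of the argument is a variational estimate. Expanding $I^n_t-I^n_s$ as its explicit left-point sum over the level-$n$ dyadic grid and using Chen's relation (already verified) yields, for $s<t$ in $D_T$ with intermediate dyadic times $s=w_0<w_1<\dots<w_m=t$, the identity $\sum_{l=0}^{m-1}\mathbb{X}_{w_l,w_{l+1}}=(X_{\tau^n_{k_-}}-X_s)\otimes X_{s,w_1}+(\mathbb{X}_{s,t}-\mathbb{X}^n_{s,t})$, where $\tau^n_{k_-}$ is the last dyadic time $\le s$; hence $|\sum_l\mathbb{X}_{w_l,w_{l+1}}|\le 2^{-n}\omega(s,t)^{1/p'}+2c\,2^{-n(1-\varepsilon)}$, and in particular $|\mathbb{X}_{s,t}|$ obeys the same bound whenever $(s,t)$ contains no level-$n$ dyadic time. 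On a single dyadic interval, taking $u\uparrow\tau^n_{k+1}$ in Chen's relation and using $\mathbb{X}_{u-,u}=0$ gives $\mathbb{X}_{\tau^n_k,\tau^n_{k+1}}=\mathbb{X}_{\tau^n_k,\tau^n_{k+1}-}+X_{\tau^n_k,\tau^n_{k+1}-}\otimes\Delta X_{\tau^n_{k+1}}$ with $|X_{\tau^n_k,\tau^n_{k+1}-}|\le 2^{-n}$, and $\mathbb{X}_{\tau^n_k,\tau^n_{k+1}-}$ is controlled by the previous bound since its interval contains no level-$n$ dyadic time. Given a partition $\mathcal{P}$ with points in $D_T$, I would attach to each $[t_i,t_{i+1}]\in\mathcal{P}$ the largest level $n_i$ with $2^{-n_ip'}\ge\omega(t_i,t_{i+1})$ --- so $2^{-n_i}\asymp\omega(t_i,t_{i+1})^{1/p'}$ and, each dyadic increment having length $\ge 2^{-n_i}$, at most $2^{p'}<8$ level-$n_i$ dyadic intervals lie inside $[t_i,t_{i+1}]$ --- enlarging $n_i$ if necessary so that~\eqref{eq:assumption} is in force there. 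Decomposing $\mathbb{X}_{t_i,t_{i+1}}$ via Chen's relation into the (at most two) boundary pieces, the $O(1)$ interior dyadic pieces, and the cross terms $X_\cdot\otimes X_\cdot$ (each $\le\omega(t_i,t_{i+1})^{2/p'}$), and inserting the two displays together with $|\Delta X_u|\le\omega(t_i,t_{i+1})^{1/p'}$ for $u\in[t_i,t_{i+1}]$, one obtains a bound for $|\mathbb{X}_{t_i,t_{i+1}}|$ in terms of $\omega(t_i,t_{i+1})$, of jump contributions $2^{-n_i}|\Delta X_u|$, and of the residual error $2^{-n_i(1-\varepsilon)}$. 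Raising this to the power $p/2$ and summing over $\mathcal{P}$: the $\omega(t_i,t_{i+1})$-contribution and the jump contribution are controlled through superadditivity of $\omega$ (using $p/p'>1$) and $\sum_u|\Delta X_u|^{p'}\le\omega(0,T)$, while the residual terms must be summed over $\mathcal{P}$ --- this is the delicate step, and is exactly where the quantitative rate in~\eqref{eq:assumption} (as opposed to mere convergence of the approximating integrals) is indispensable. A bound on $\sum_{[t_i,t_{i+1}]\in\mathcal{P}}|\mathbb{X}_{t_i,t_{i+1}}|^{p/2}$ uniform in $\mathcal{P}$ then gives $\|\mathbb{X}\|_{p/2\var}<\infty$ and finishes the proof.

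The step I expect to be the main obstacle is precisely the last one. The level $n_i$ is pulled in two directions: it must be large enough for the rate in~\eqref{eq:assumption} to apply and for the accumulated residual errors $2^{-n_i(1-\varepsilon)}$ (and jump corrections), once raised to the power $p/2$, to sum over $\mathcal{P}$ to a constant controlled by $\omega(0,T)$, yet small enough that only boundedly many level-$n_i$ dyadic intervals fall inside $[t_i,t_{i+1}]$ so that no large combinatorial factor enters. Reconciling these requirements --- isolating the finitely many intervals on which $N(\cdot,\varepsilon)$ is large, and controlling the interplay between the number of dyadic intervals, the residual rate, and the summation against $\omega$ --- is where the argument has to be carried out with real care.
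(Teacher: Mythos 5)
Your first two steps (Chen's relation, the c\`adl\`ag property, restriction of partitions to $D_T$ by right-continuity) and the algebraic identity expressing $\mathbb{X}_{s,t}-\mathbb{X}^n_{s,t}$ through the level-$n$ dyadic times are fine, but the step you yourself flag as delicate is not a technicality to be "carried out with care'' --- with your choice of scale it is arithmetically impossible, and resolving it requires an ingredient your plan does not contain. If $n_i$ is chosen so that $2^{-n_i}\asymp\omega(t_i,t_{i+1})^{1/p'}$ (the choice that keeps the number of interior level-$n_i$ dyadic intervals bounded), then the residual error from \eqref{eq:assumption} contributes $2^{-n_i(1-\varepsilon)p/2}\asymp\omega(t_i,t_{i+1})^{p(1-\varepsilon)/(2p')}$ to the $p/2$-sum; since $p<3$ and $p'>2$ this exponent is below $3/4<1$, and a sum $\sum_i\omega(t_i,t_{i+1})^{\beta}$ with $\beta<1$ is not controlled by super-additivity --- it blows up along refining partitions (take $\omega(s,t)\asymp t-s$ and uniform partitions). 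To make the residual summable you must instead take $n_i$ so large that $2^{-n_i(1-\varepsilon)}\lesssim\omega(t_i,t_{i+1})^{2/p}$; but at that scale the number of level-$n_i$ dyadic times inside $[t_i,t_{i+1}]$ is of order $2^{n_i q}\omega(t_i,t_{i+1})\gg 1$, your decomposition into "two boundary pieces plus $O(1)$ interior pieces'' collapses, and the many interior points must be handled by a Young-type point-deletion argument. That is exactly the paper's missing-from-your-proposal ingredient: Lemma~\ref{lem:young estimate}, a maximal inequality for $\int X^n\otimes\dd X$ giving the bound $\max\{2^{-n}c(s,t)^{1/q},\,2^{n(q-2)}c(s,t)+c(s,t)^{2/q}\}$. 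The paper then argues pointwise in $(s,t)\in D_T^2$, never fixing a partition: for each pair choose $n\ge N(s,\varepsilon)\vee N(t,\varepsilon)$ with $2^{-n}\le c(s,t)^{2/(p(1-\varepsilon))}$ (possible because only the threshold $N$, not the constant in \eqref{eq:assumption}, depends on the time point), take $q$ close to $2$ and $\varepsilon$ small so that all resulting exponents are $\ge 1$, and conclude $|\mathbb{X}_{s,t}|^{p/2}\le C\,c(s,t)$; super-additivity of $c$ then gives the $p/2$-variation, and general $(s,t)$ follow by approximating both endpoints from the right within $D_T$.

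A second, smaller gap: you invoke $\mathbb{X}_{u-,u}=0$ at every time $u$ "since $I$ is of It\^o type''. This is not a hypothesis of the theorem: $\int_0^\cdot X_-\otimes\dd X$ is merely assumed to be a c\`adl\`ag function approximated by the left-point sums, with the stated rate, at times of the dense set $D_T$. The jump identity $\Delta I_u=X_{u-}\otimes\Delta X_u$ at $u\notin D_T$ (where $I$ is pinned down only through right-continuity) would require an interchange of the limits $n\to\infty$ and $s\uparrow u$ for which no uniformity is assumed, so it needs a proof --- or better, should be avoided: the paper's argument never uses left limits of $I$ or $\mathbb{X}$, precisely because it only ever approximates $s$ and $t$ from the right inside $D_T$ and then appeals to right-continuity of the control.
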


To prove Theorem~\ref{thm:existence rough path}, we adapted some arguments used in the proof of \cite[Theorem~4.12]{Perkowski2016}, in which the existence of rough paths above typical continuous price paths is shown, cf. Subsection~\ref{subsec:typical price paths} below. As a preliminary step, we need a version of Young's maximal inequality (cf.~\cite{Young1936} or \cite[Theorem~1.16]{Lyons2007}) specific to the integral $\int X^n\otimes \dd X$. 

Recall that a function $c\colon \Delta_T \to [0,\infty)$ is called right-continuous super-additive if
\begin{equation*} 
  c(s,u) + c (u,t) \leq c(s,t)\quad \text{for}\quad 0\leq s \leq u \leq t \leq T,  
\end{equation*}
and $c(s,t)$ is right-continuous in $t$ for fixed $s$. Note that $X\in D^{p\var}([0,T];\R^d)$ if and only if there exits a right-continuous super-additive function $c$ s.t. $|X_{s,t}|^p \leq c(s,t)$ for all $(s,t) \in \Delta_T$.  

\begin{lemma}\label{lem:young estimate}
  Let $X\in D^{p\var}([0,T];\R^d)$ for every $p > 2$. Then it holds
  \begin{equation*}
    \bigg|\int_0^{t} X^n_r \otimes \dd X_r-\int_0^{s} X^n_r \otimes \dd   X_r-X_s \otimes X_{s,t}\bigg | 
    \lesssim \max\{2^{-n} c(s,t)^{1/q}, 2^{n (q-2)} c(s,t) + c(s,t)^{2/q}\},
  \end{equation*}
  for $q \in (2,3)$ and every super-additive function $c\colon \Delta_T\to [0,\infty)$ (which may depend on $q$) such that $|X_{s,t}|^q \leq c(s,t)$ for all $(s,t) \in \Delta_T$.
\end{lemma}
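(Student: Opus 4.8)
The quantity to be estimated is $|\Lambda_{s,t}|$ with $\Lambda_{s,t}:=\int_0^t X^n_r\otimes\dd X_r-\int_0^s X^n_r\otimes\dd X_r-X_s\otimes X_{s,t}$, and the plan is to exploit two elementary features of $\Lambda$: how it behaves under subdivision, and what it looks like inside one dyadic cell. Since the dyadic integral is additive in its endpoints, a one-line computation using $X_{s,t}=X_{s,u}+X_{u,t}$ yields the almost-additivity identity
\[
  \Lambda_{s,t}-\Lambda_{s,u}-\Lambda_{u,t}=X_{s,u}\otimes X_{u,t},\qquad s\le u\le t .
\]
On the other hand, if $[a,b]\subseteq(\tau^n_k,\tau^n_{k+1}]$ or $a=\tau^n_k\le b\le\tau^n_{k+1}$, then $X^n\equiv X_{\tau^n_k}$ on $(a,b]$, hence $\int_a^b X^n_r\otimes\dd X_r=X_{\tau^n_k}\otimes X_{a,b}$ and $\Lambda_{a,b}=(X_{\tau^n_k}-X_a)\otimes X_{a,b}$; by the definition of $\tau^n_{k+1}$ one has $|X_{\tau^n_k}-X_a|\le2^{-n}$ as soon as $a<\tau^n_{k+1}$, so $|\Lambda_{a,b}|\le2^{-n}|X_{a,b}|\le2^{-n}c(a,b)^{1/q}$, while $\Lambda_{a,b}=0$ whenever $a=\tau^n_k$. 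In particular, if $[s,t]$ contains no dyadic time in its interior then $|\Lambda_{s,t}|\le2^{-n}c(s,t)^{1/q}$, which already disposes of that case via the first term of the asserted maximum.

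For a general interval let $\mathcal D=\{s=v_0<v_1<\dots<v_L=t\}$ collect $s$, $t$ and all dyadic times in $(s,t)$; this is a finite set since $X\in D^{q\var}([0,T];\R^d)$. Iterating the almost-additivity identity gives
\[
  \Lambda_{s,t}=\sum_{i=0}^{L-1}\Lambda_{v_i,v_{i+1}}+\sum_{i=0}^{L-1}X_{s,v_i}\otimes X_{v_i,v_{i+1}} .
\]
For $1\le i\le L-1$ the point $v_i$ is a dyadic time and $[v_i,v_{i+1}]$ sits inside the cell it begins, so the cell computation gives $\Lambda_{v_i,v_{i+1}}=0$; only $\Lambda_{v_0,v_1}=(X_{\tau^n_k}-X_s)\otimes X_{s,v_1}$ survives, bounded by $2^{-n}c(s,t)^{1/q}$. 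For the remainder $I(\mathcal D):=\sum_{i}X_{s,v_i}\otimes X_{v_i,v_{i+1}}$ I would run the standard Young-type coarse-graining argument: in any sub-partition with $r$ intervals, super-additivity of $c$ produces an interior point whose two adjacent cells have total $c$-mass $\le 2c(s,t)/(r-1)$, and deleting it changes $I$ by the single term $X_{v_{j-1},v_j}\otimes X_{v_j,v_{j+1}}$, of size $\le(c(s,t)/(r-1))^{2/q}$; removing points down to the trivial partition $\{s,t\}$, where $I$ vanishes, yields
\[
  |I(\mathcal D)|\le c(s,t)^{2/q}\sum_{m=1}^{L-1}m^{-2/q}\lesssim_q c(s,t)^{2/q}\,L^{\,1-2/q}.
\]

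It remains to estimate $L$. Every full dyadic cell $[\tau^n_k,\tau^n_{k+1}]\subseteq[s,t]$ satisfies $|X_{\tau^n_k,\tau^n_{k+1}}|\ge2^{-n}$ (by the definition of $\tau^n_{k+1}$ and right-continuity of $X$), hence $|X_{\tau^n_k,\tau^n_{k+1}}|^q\ge2^{-nq}$; summing and using super-additivity of $c$ shows there are at most $2^{nq}c(s,t)$ such cells, so $L\le2^{nq}c(s,t)+2$. Consequently $c(s,t)^{2/q}L^{1-2/q}\lesssim_q c(s,t)^{2/q}+2^{n(q-2)}c(s,t)$, and collecting the three contributions gives $|\Lambda_{s,t}|\lesssim_q 2^{-n}c(s,t)^{1/q}+2^{n(q-2)}c(s,t)+c(s,t)^{2/q}$, which is dominated by the stated maximum. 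The main obstacle is that the subdivision defect $X_{s,u}\otimes X_{u,t}$ only carries the exponent $2/q<1$ in $c$, so the classical Young/sewing scheme — which needs an exponent strictly above $1$ for the coarse-graining error series to converge — is unavailable; this is precisely why one must use that $X^n$ is a step function at scale $2^{-n}$, whose number of jumps on $[s,t]$ is $O(2^{nq}c(s,t))$: this caps the number of removal steps and turns the a priori divergent sum $\sum_{m\le L}m^{-2/q}\sim L^{1-2/q}$ into the bounded factor responsible for the $2^{n(q-2)}c(s,t)$ term. (All implicit constants may depend on $q$ and blow up as $q\downarrow2$, which is immaterial for the intended applications.)
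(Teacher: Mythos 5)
Your argument is correct and takes essentially the same route as the paper's proof: the case of an interval containing no dyadic time yields the $2^{-n}c(s,t)^{1/q}$ term, the Young-type successive deletion of points combined with the super-additivity pigeonhole gives the $c(s,t)^{2/q}L^{1-2/q}$ bound, and the cap $L\lesssim 2^{nq}c(s,t)$ on the number of dyadic times produces the $2^{n(q-2)}c(s,t)$ term. The only difference is organizational: you first iterate the Chen-type identity to reduce to the Riemann-sum term $I(\mathcal D)$ before deleting points (and you justify the count of dyadic times explicitly), whereas the paper performs the point deletion directly on the integral expression.
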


The proof follows the classical arguments used to derive Young's maximal inequality.

\begin{proof}
  Let $X \in D^{p\var}([0,T];\R^d)$ and let $X^n$ be its dyadic approximation as defined in~\eqref{eq:approximation}.
  
  1. If there exists no $k$ such that $\tau^n_k \in [s,t]$, then 
  \begin{equation*}
    \bigg|\int_0^{t} X^n_r \otimes \dd X_r-\int_0^{s} X^n_r \otimes \dd  X_r-X_s \otimes X_{s,t} \bigg | \lesssim 2^{-n} c(s,t)^{1/q}
  \end{equation*}
  due to the estimate $|X_{s,t}| \leq c(s,t)^{1/q}$. 
   
  2. If there exists a $k$ such that $\tau^n_k \in [s,t]$, we may assume that $s = \tau^n_{k_0}$ for some $k_0$. Otherwise, we just add $c(s,t)^{2/q}$ to the right-hand side. Let $\tau^n_{k_0} , \dots, \tau^n_{k_0 + N-1}$ be those $(\tau^n_k)_k$ which are in $[s,t)$. W.l.o.g. we may further suppose  that $N \geq 2$.  Abusing notation, we write $\tau^n_{k_0 + N} = t$. The idea is now to successively delete points $(\tau^n_{k_0 + \ell})$ from $\tau^n_{k_0} , \dots, \tau^n_{k_0 + N-1}$. Due to the super-additivity of $c$, there exist $\ell \in \{1, \dots, N-1\}$ such that
  \begin{equation*}
    c(\tau^n_{k_0 + \ell -1}, \tau^n_{k_0 + \ell + 1})
    \leq \frac{2}{N-1} c(s,t)
  \end{equation*}
  and thus 
  \begin{align*}
    |X_{\tau^n_{k_0 + \ell -1}}
    &  \otimes X_{\tau^n_{k_0 + \ell -1}, \tau^n_{k_0 + \ell}} + X_{\tau^n_{k_0 + \ell}}\otimes X_{\tau^n_{k_0 + \ell}, \tau^n_{k_0 + \ell + 1}} -   X_{\tau^n_{k_0 + \ell -1}} \otimes X_{\tau^n_{k_0 + \ell -1}, \tau^n_{k_0 + \ell + 1}}| \\
    &= |X_{\tau^n_{k_0 + \ell -1},\tau^n_{k_0 + \ell}}\otimes X_{\tau^n_{k_0 + \ell}, \tau^n_{k_0 + \ell + 1}}|
    \leq c(\tau^n_{k_0 + \ell -1},\tau^n_{k_0 + \ell + 1})^{2/q} 
    \leq \Big(\frac{2}{N-1} c(s,t)\Big)^{2/q}.
  \end{align*}
  Successively deleting in this manner all the points except $\tau^n_{k_0} = s$ and $\tau^n_{k_0 + N} = t$ from the partition generated by $\tau^n_{k_0} , \dots, \tau^n_{k_0 + N}$ leads to the estimate
  \begin{align*}
   \bigg|\int_0^{t} X^n_r \otimes \dd X_r-\int_0^{s} X^n_r  \otimes \dd X_r - X_s  \otimes X_{s,t}\bigg |
    &\leq \sum_{k=2}^{N} \Big(\frac{2}{k-1} c(s,t)\Big)^{2/q}  
    \lesssim N^{1-2/q} c(s,t)^{2/q}\\
    &\lesssim   (\#\{ k: \tau^n_k \in [s,t] \})^{1-2/q} c(s,t)^{2/q} + c(s,t)^{2/q}
  \end{align*}
  since $N\leq \#\{ k: \tau^n_k \in [s,t] \}$.

  Hence, 1. and 2., in combination with $\#\{ k: \tau^n_k \in [s,t] \} \lesssim 2^{nq} c(s,t)$, imply the assertion.
\end{proof}

With the auxiliary Lemma~\ref{lem:young estimate} at hand we come to the proof of Theorem~\ref{thm:existence rough path}. 

\begin{proof}[Proof of Theorem~\ref{thm:existence rough path}]
  It is straightforward to check that $(X,\mathbb{X})$ satisfies condition~(1) and~(2) of Definition~\ref{def:rough path} and $\|X\|_{p\var}<\infty$. Therefore, it remains to show the $p/2$-variation (in the sense of~\eqref{eq:p/2 variation}) of $\mathbb{X}$ for every $p > 2$. 
  
  Let $c$ be a right-continuous super-additive function with $|X_{s,t}|^q \leq c(s,t)$. Then for all $(s,t) \in \Delta_T \cap D_T^2$, using \eqref{eq:assumption} and Lemma~\ref{lem:young estimate}, for every $\varepsilon > 0$ and $q \in (2,3)$ we get a constant $c = c(p,q,\varepsilon)$ such that
  \begin{align}\label{eq:area variation}
    \begin{split}
    |\mathbb{X}_{s,t}| 
    & \le  c\Big(2^{-n(1-\varepsilon)} + \bigg|\int_0^{t} X^n_r \otimes \dd X_r-\int_0^{s} X^n_r \otimes \dd   X_r-X_s \otimes X_{s,t}\bigg |\Big) \\
    & \le c\Big( 2^{-n(1-\varepsilon)} + \max\{2^{-n} c(s,t)^{1/q},  2^{-n(2-q)}  c(s,t) + c(s,t)^{2/q}\} \Big),
    \end{split}
  \end{align}
  for all $n \geq N$, where $N \in \N$ may depend on $s,t$ and $\varepsilon$. 
  
  In the case that $c(s,t) \leq 1$, we set $\alpha := p/2$ for $p \in (2,3)$ and choose $n \geq N$ such that $2^{-n} \leq c(s,t)^{1/(\alpha(1-\varepsilon))}$. Taking this $n$ in~\eqref{eq:area variation}, we obtain 
  \begin{align*}
    |\mathbb{X}_{s,t}|^\alpha & \le c\Big( c(s,t) + \max\left\{c(s,t)^{1/(1-\varepsilon)} c(s,t)^{\alpha/q},  c(s,t)^{(2-q)/(1-\varepsilon) + \alpha}  + c(s,t)^{2\alpha/q}\right\} \Big) \\
    & = c\Big(c(s,t) + \max\left\{c(s,t)^{\frac{q+\alpha(1-\varepsilon)}{q(1-\varepsilon)}},  c(s,t)^{\frac{2-q+\alpha(1-\varepsilon)}{1-\varepsilon}}  + c(s,t)^{2\alpha/q}\right\}\Big)
  \end{align*}
  for some constant $c = c(\alpha,q,\varepsilon)$.
  Now we would like all the exponents in the maximum on the right-hand side to be larger or equal to 1. For the first term, this is satisfied as long as $\varepsilon<1$. For the third term, we need $\alpha\ge q/2$. For the second term, we need $\alpha\ge (q-1-\varepsilon)/(1-\varepsilon)$. Since $\varepsilon > 0$ can be chosen arbitrarily close to $0$, it suffices if $\alpha > q-1$. This means, choosing a $q_0 > 2$ close to $2$ enough such that $p/2 = \alpha > \max\{q_0/2, q_0-1\}$, we obtain that $|\mathbb{X}_{s,t}|^{p/2} \le c \cdot c(s,t)$ for some constant $c = c(p,q_0)$.
   
  For the remaining case $c(s,t) > 1$, we simply estimate
  \begin{equation*}
    |\mathbb{X}_{s,t}|^{p/2} \le c\Big( \Big\| \int_0^\cdot X_{r-} \otimes\dd X_r \Big\|_\infty^{p/2} + \|X\|_\infty^{p} \Big)\le c\bigg(\Big\| \int_0^\cdot X_{r-} \otimes \dd X_r \Big\|_\infty^{p/2} + \|X\|_\infty^{p}\bigg) c(s,t).
  \end{equation*}
  
  Therefore, $|\mathbb{X}_{s,t}|^{p/2} \le c\cdot c(s,t)$ for some constant $c= c(p)$ and for every $(s,t)\in \Delta_T \cap D_T^2$. Moreover, for an arbitrary $(s,t) \in \Delta_T$, picking any sequences $(s_k)_{k \in \N}$ and $(t_k)_{k \in \N}$ in $D_T$ such that $s_k \downarrow s$ and $t_k \downarrow t$ as $k \rightarrow \infty$, we have 
  $$|\mathbb{X}_{s,t}|^{p/2} = \lim_{k \rightarrow \infty}|\mathbb{X}_{s_k,t_k}|^{p/2} \le c(p) \limsup_{k \rightarrow \infty}c(s_k,t_k) \le c(p)\lim_{k \rightarrow \infty}c(s, t_k) = c(p) c(s,t),
  $$
  since $c(s,t)$ is right-continuous and super-additive. This ensures that $\|\mathbb{X}\|_{p\var} < \infty$.
\end{proof}

\begin{remark}
  All arguments in the proofs of Theorem~\ref{thm:existence rough path} and of Lemma~\ref{lem:young estimate} extend immediately from $\R^d$ to (infinite dimensional) Banach spaces. However, while the theory of continuous rough paths works for Banach spaces (cf.~\cite{Lyons1998,Lyons2007}), the current results about c\`adl\`ag rough paths are developed in finite dimensional settings (cf. \cite{Friz2017,Chevyrev2017b}). For this reason we also focus only on $\R^d$-valued paths and stochastic processes.     
\end{remark}

\subsection{Semimartingales}

Let $(\Omega, \mathcal{F},\mathbb{P})$ be a probability space with filtration $(\mathcal{F}_t)_{t\in [0,T]}$ satisfying the usual conditions. For a $\R^d$-valued semimartingale~$X$ we consider
\begin{equation}\label{eq:semimartingale lift}
  \mathbb{X}_{s,t} := \int_s^t (X_{r-} - X_s) \otimes\dd X_r = \int_0^t X_{r-} \otimes\dd X_r - \int_0^s X_{r-} \otimes\dd X_r - X_s\otimes  X_{s,t},\quad (s,t)\in \Delta_T,
\end{equation}
where the integration $\int X_{r-} \otimes\dd X$ is defined as It\^o integral. We refer to~\cite{Protter2004} and~\cite{Jacod2003} for more details on stochastic integration. 

\begin{proposition}\label{prop:semimartingales}
  Let $X$ be a $\R^d$-valued semimartingale. If $\mathbb{X}$ is defined as in~\eqref{eq:semimartingale lift} via It\^o integration, then $(X,\mathbb{X})\in \mathcal{W}^p([0,T];\R^d)$ for every $p\in (2,3)$ $\P$-almost surely.
\end{proposition}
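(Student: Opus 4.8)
The plan is to verify the hypotheses of Theorem~\ref{thm:existence rough path} for $\P$-almost every sample path of the semimartingale~$X$. Two things need to be checked: first, that the sample paths lie in $D^{p\var}([0,T];\R^d)$ for every $p>2$, and second, that the dyadic left-point Riemann sums $\int_0^t X^n_s\otimes\dd X_s$ converge to the It\^o integral $\int_0^t X_{r-}\otimes\dd X_r$ with the rate~\eqref{eq:assumption}, along a dense set of times~$t$.

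For the finite $p$-variation, I would decompose $X = X_0 + M + A$ into a local martingale part and a finite variation part. The finite variation part trivially has finite $1$-variation, hence finite $p$-variation for every $p\geq 1$. For the local martingale part, I would localize so that $M$ is a square-integrable martingale (or use the standard fact directly): by the Burkholder--Davis--Gundy inequality the $p$-variation of a semimartingale is almost surely finite for every $p>2$ — this is a classical result (see e.g.\ \cite{Lepingle1976}, or it follows from the existence of the quadratic variation together with a Young-type argument). In fact one can produce an explicit right-continuous super-additive control: $c(s,t) := K\big([M]_t-[M]_s + (\operatorname{Var}A)_{s,t}^q\big)$ for a suitable random constant dominates $|X_{s,t}|^q$ for any fixed $q\in(2,3)$, using that $|M_{s,t}|$ is controlled by the quadratic variation increment to a fractional power via BDG plus a chaining argument.

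The main obstacle is the convergence estimate~\eqref{eq:assumption}. Here I would first identify $\int_0^t X^n_s\otimes\dd X_s = \sum_k X_{\tau^n_k}\otimes X_{\tau^n_k\wedge t,\tau^n_{k+1}\wedge t}$ as the It\^o integral $\int_0^t X^n_{s}\otimes\dd X_s$ of the (predictable, since left-continuous and adapted) dyadic step process~$X^n$, because the $\tau^n_k$ are stopping times and $X^n$ is constant on the stochastic intervals $(\tau^n_k,\tau^n_{k+1}]$. Then $\int_0^t X^n_s\otimes\dd X_s - \int_0^t X_{r-}\otimes\dd X_r = \int_0^t (X^n_{r} - X_{r-})\otimes\dd X_r$, and the integrand is bounded in supremum norm by $2^{-n}$. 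I would estimate this stochastic integral using the It\^o isometry / BDG inequality: its $L^2$-norm (on the martingale part) is at most $2^{-n}\E[[M]_T]^{1/2}$ up to constants, and the drift contribution is bounded by $2^{-n}(\operatorname{Var}A)_T$. A Borel--Cantelli argument along the dyadic sequence then gives, for each fixed rational $t$, that $|\int_0^t(X^n_r - X_{r-})\otimes\dd X_r| \lesssim 2^{-n(1-\varepsilon)}$ for all $n\geq N(\omega,t,\varepsilon)$, almost surely — after localization, and then removing the localization since the estimate is pathwise in the end. Choosing $D_T$ to be the rationals in $[0,T]$ together with~$T$ (a countable set, so a single null set handles all of them) completes the verification, and Theorem~\ref{thm:existence rough path} then yields $(X,\mathbb{X})\in\mathcal{W}^p([0,T];\R^d)$ for every $p\in(2,3)$, with the It\^o integral in~\eqref{eq:semimartingale lift} coinciding with the $\int_0^t X_{r-}\otimes\dd X_r$ produced by the theorem because both are limits of the same left-point sums.

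One technical point to be careful about: the number of dyadic times $\tau^n_k$ in $[0,T]$ is almost surely finite (it is bounded by $2^{2n}$ times the $2$-variation of $X$, which is finite), so all the sums above are finite and the integrals are genuinely well-defined; and the localizing sequence of stopping times can be taken so that on each localized piece $M$ is bounded, making the $L^2$ estimates legitimate before one passes to the pathwise conclusion. The step I expect to require the most care is making the Borel--Cantelli / rate estimate uniform enough in~$t$ over the countable dense set while keeping the constant $c$ in~\eqref{eq:assumption} independent of~$t$ in the way the theorem demands — this is handled by noting $\E[\sup_{t\le T}|\int_0^t(X^n_r-X_{r-})\otimes\dd X_r|^2]\lesssim 4^{-n}$ via Doob and BDG, which gives the rate simultaneously at \emph{all} $t\le T$, so in fact $N$ can be taken independent of~$t$.
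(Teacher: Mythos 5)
Your proposal is correct and takes essentially the same route as the paper's proof: decompose $X = X_0 + M + A$, treat the locally square-integrable martingale part by a Burkholder--Davis--Gundy estimate on $\big\|\int_0^\cdot (X^n_r - X_{r-})\otimes \dd M_r\big\|_\infty$ followed by Chebyshev and Borel--Cantelli (after localization, removing it on $\{\sigma_k = T\}$), bound the finite-variation part by $2^{-n}(\mathrm{Var}\,A)_T$ using $\|X^n - X_-\|_\infty \leq 2^{-n}$, invoke L\'epingle for the $p$-variation, and then apply Theorem~\ref{thm:existence rough path}. Your closing observation that the sup-norm estimate makes the rate uniform in $t$ is exactly how the paper's argument works as well.
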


\begin{proof}
  First note that every semimartingale possesses c\`adl\`ag sample paths of finite $p$-variation for any $p > 2$ (see e.g.~\cite[Chapter~II.1]{Protter2004} and~\cite{Lepingle1967}) and $\int X_{r-} \otimes\dd X$ has c\`adl\`ag sample paths (see e.g.~\cite[Theorem~I.4.31]{Jacod2003}). Therefore, in order to deduce Proposition~\ref{prop:semimartingales} from Theorem~\ref{thm:existence rough path}, it is suffices to verify that the condition~\eqref{eq:assumption} holds $\P$-almost surely for $\int X_{-} \otimes\dd X$ and its dyadic approximation $\int X^n \otimes\dd X $ defined via~\eqref{eq:approximation}.

  \textit{1.} Let us assume that $X = M$ is a square integrable martingale and denoted by $M^n$ its approximation defined as in~\eqref{eq:approximation}. By Burkholder-Davis-Gundy inequality we observe 
  \begin{equation}\label{eq:BDG inequalities}
    C(M,n):=\E\bigg [\left \|\int_0^{\cdot} M^n\otimes \dd M - \int_0^{\cdot} M_{-} \otimes\dd M \right \|^2_\infty \bigg] \lesssim 2^{-2n},\quad n\in \N,
  \end{equation}
  where the constant depends on the quadratic variation of $M$. Combining Chebyshev's inequality with~\eqref{eq:BDG inequalities}, we get 
  \begin{equation*}
    \P \bigg(\bigg\|\int_0^{\cdot} M^n\otimes \dd M - \int_0^{\cdot} M_{-} \otimes\dd M \bigg \|_\infty \geq  2^{-n(1-\epsilon)}\bigg) 
    \leq 2^{2n(1-\epsilon)}C(M,n)
    \lesssim 2^{-2n\epsilon}.
  \end{equation*}
  Since the right-hand side is summable in $n$, the Borel-Cantelli lemma gives 
  \begin{equation*}
    \bigg\|\int_0^{\cdot} M^n\otimes \dd M - \int_0^{\cdot} M_{-} \otimes\dd M \bigg \|_\infty 
    \lesssim_{\omega,\varepsilon} 2^{-n(1-\varepsilon)} \quad\P\text{-a.s.}
  \end{equation*}
 
  \textit{2.} Let $X = M$ be a locally square integrable martingale. Let $(\sigma_k)_{k \in \N}$ be a localizing sequence of stopping times for $M$ such that $\sigma_k \leq \sigma_{k+1}$, $\lim_{k \to \infty}\P(\sigma_k = T)= 1$, and for every~$k$, the stopped process $M^{\sigma_k}$ is a square integrable martingale. Thanks to 1. applied to every $M^{\sigma_k}$, for every $k$ there exists a $\Omega_k \subset \Omega$ with $\P(\Omega_k) = 1$ such that for all $\omega \in \Omega_k$, it holds that 
  \begin{equation*}
    \left\|\int_0^{\cdot \wedge \sigma_k} M^n\otimes \dd M - \int_0^{\cdot \wedge \sigma_k} M_{-}\otimes \dd M \right \|_\infty  \lesssim_{\omega,k} 2^{-n(1-\varepsilon)}
  \end{equation*}
  for any $n$. It follows immediately that \eqref{eq:assumption} holds for any $\omega \in \bigcup_{k \in \N} (\{\sigma_k = T\} \cap \Omega_k)$ and it holds that $\P(\bigcup_{k \in \N} (\{\sigma_k = T\} \cap \Omega_k)) = 1$.
  
  \textit{3.} By \cite[Theorem~III.29]{Protter2004}, every semimartingale $X$ can be decomposed as $X = X_0 + M + A$, where $X_0\in \R^d$, $M$ is a locally square integrable martingale and $A$ has finite variation. By 2. we obtain that $\left\|\int_0^{\cdot} X^n \otimes\dd M - \int_0^{\cdot} X_{-} \otimes\dd M \right \|_\infty  \lesssim_{\omega,\varepsilon} 2^{-n(1-\varepsilon)}$ $\P$-a.s.; on the other hand, since $\|X^n - X_{-}\|_{\infty} \leq 2^{-n}$, we also have $\left\|\int_0^{\cdot} X^n \otimes\dd A - \int_0^{\cdot} X_{-} \otimes\dd A \right \|_\infty \lesssim_{\omega} 2^{-n}$. 
\end{proof}

\begin{remark}\footnote{After completion of the present work, it was pointed out in \cite{Friz2017b} that also the It\^o lift of semimartingales can be constructed using an enhanced version of Burkholder-Davis-Gundy inequality.}
  Very recently, Chevyrev and Friz proved that every semimartingale can be lifted via the ``Marcus lift'' to a weakly geometric c\`adl\`ag rough path based on a new enhanced Burkholder-Davis-Gundy inequality, see~\cite[Section~4]{Chevyrev2017b}. Their result allows for deducing the existence of It\^o rough paths due to \cite[Proposition~16]{Friz2017}. However, let us emphasize that our approach directly provides the existence of an It\^o rough path only relying on classical It\^o integration and fairly elementary analysis (cf. Theorem~\ref{thm:existence rough path}). Moreover, it is independent of the results from \cite{Chevyrev2017b,Friz2017}. 
\end{remark}

Two natural generalizations of semimartingales are semimartingales perturbed by paths of finite $q$-variation for $q\in [1,2)$ and Dirichlet processes. While these stochastic processes are beyond the scope of classical It\^o integration, one can still construct corresponding random rough paths as limit of approximating Riemann sums.

For $Y \in D^{q\var}([0,T];\R^d)$ with $q \in [1,2)$, the Young integral  
\begin{equation*}
  \int_0^\cdot Y_{r-} \otimes \dd Y_r := \lim_{n\to \infty} \sum_{[s,t]\in \mathcal{P}^n} Y_{s-} \otimes Y_{s\wedge \cdot,t\wedge \cdot}
\end{equation*}
exists along suitable sequences of partition $(\mathcal{P}^n)_{n\in \N}$ and belongs to $D^{q\var}([0,T];\R^{d \times d})$, see for instance \cite{Young1936} or \cite[Proposition~14]{Friz2017}. In this case the Young integral can also obtained via the dyadic approximation $(Y^n)$ as defined in~\eqref{eq:approximation}. Indeed, using the Young-Loeve inequality (see e.g. \cite[Theorem~2]{Friz2017}) and a standard interpolation argument, one gets 
\begin{align*}
  \bigg\|\int_0^\cdot Y^n_r \otimes \dd Y_r -& \int_0^\cdot Y_{r-} \otimes \dd Y_r \bigg\|_{\infty} 
  \lesssim \|Y^n - Y_{-}\|_{q^\prime\var}\|Y\|_{q^\prime\var}\\
  &\lesssim \|Y^n - Y_{-}\|_{q\var}^{q/q^\prime}\|Y- Y^n\|_{\infty}^{1-q/q^\prime} \|Y\|_{q^\prime\var}
  \lesssim \|Y\|_{q\var}^{q/q^\prime+1}2^{-n(1-q/q^\prime)}
\end{align*}
for $1\leq q < q^\prime < 2$ and thus $\lim_{n \rightarrow \infty} \|\int_0^\cdot Y^n_r \otimes \dd Y_r - \int_0^\cdot Y_{r-} \otimes \dd Y_r \|_{\infty} = 0$.

As a consequence of Proposition~\ref{prop:semimartingales} and the previous discussion, it follows that semimartingale perturbed by paths of finite $q$-variation admit a natural rough path lift in the spirit of It\^o integration. A similar result for the canonical Marcus lift was presented in \cite[Section~5.1]{Chevyrev2017b}.

\begin{corollary}
  Let $Z=X+Y$ be semimartingale perturbed by paths of finite $q$-variation with $q\in [1,2)$, i.e. $X$ is a semimartingale and $Y$ is a stochastic process with sample paths of finite $q$-variation for $q\in [1,2)$. Then, there exists c\`adl\`ag rough path $(Z,\mathbb{Z})\in \mathcal{W}^p([0,T];\R^d)$ for every $p\in (2,3)$ $\P$-almost surely, where $\mathbb{Z}$ can be constructed as limit of approximating left-point Riemann sums. 
\end{corollary}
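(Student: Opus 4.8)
The plan is to lift $Z = X + Y$ by decomposing the candidate second level object along the additive structure of $Z$, handling the ``$X$-against-$X$'' piece by Proposition~\ref{prop:semimartingales} and the three remaining pieces by Young integration. Fix, for the given $q \in [1,2)$, an auxiliary exponent $p' > 2$ close enough to $2$ that $\tfrac{1}{p'} + \tfrac{1}{q} > 1$; this is possible precisely because $q < 2$, and $X$ has sample paths in $D^{p'\var}([0,T];\R^d)$ $\P$-a.s.\ since every semimartingale does. On the $\P$-full event on which $X \in D^{p'\var}([0,T];\R^d)$ and $Y \in D^{q\var}([0,T];\R^d)$, the Young integrals
\begin{equation*}
  A_{s,t} := \int_s^t (X_{r-}-X_s)\otimes\dd Y_r,\qquad
  B_{s,t} := \int_s^t (Y_{r-}-Y_s)\otimes\dd X_r,\qquad
  C_{s,t} := \int_s^t (Y_{r-}-Y_s)\otimes\dd Y_r
\end{equation*}
are well-defined (cf.\ \cite[Proposition~14]{Friz2017}), as is the It\^o iterated integral $\mathbb{X}_{s,t}$ of \eqref{eq:semimartingale lift}, and I would set
\begin{equation*}
  \mathbb{Z}_{s,t} := \mathbb{X}_{s,t} + A_{s,t} + B_{s,t} + C_{s,t} = J_t - J_s - Z_s\otimes Z_{s,t},
\end{equation*}
where $J_t := \int_0^t Z_{r-}\otimes\dd Z_r$ denotes the sum of the It\^o integral $\int_0^t X_{r-}\otimes\dd X_r$ and the three Young integrals $\int_0^t X_{r-}\otimes\dd Y_r$, $\int_0^t Y_{r-}\otimes\dd X_r$ and $\int_0^t Y_{r-}\otimes\dd Y_r$; note that $\mathbb{Z}$ does not depend on the choice of $p'$.

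I would then verify the three conditions of Definition~\ref{def:rough path}. Chen's relation for $(Z,\mathbb{Z})$ is purely algebraic once $\mathbb{Z}_{s,t}$ is written in the form $J_t - J_s - Z_s\otimes Z_{s,t}$; the map $t \mapsto Z_{0,t} + \mathbb{Z}_{0,t}$ is c\`adl\`ag because $Z$, the It\^o integral $\int_0^\cdot X_{r-}\otimes\dd X_r$ (cf.\ \cite{Jacod2003}) and the Young integrals all have c\`adl\`ag paths. For the $p$-variation, $\|Z\|_{p\var} \le \|X\|_{p\var} + \|Y\|_{p\var} < \infty$ for every $p > 2$ (as $q < 2 < p$ forces $Y \in D^{p\var}([0,T];\R^d)$ as well), while $\|\mathbb{Z}\|_{p/2\var} \le \|\mathbb{X}\|_{p/2\var} + \|A\|_{p/2\var} + \|B\|_{p/2\var} + \|C\|_{p/2\var}$: here $\|\mathbb{X}\|_{p/2\var} < \infty$ $\P$-a.s.\ by Proposition~\ref{prop:semimartingales}, and the c\`adl\`ag Young-Loeve inequality \cite[Theorem~2]{Friz2017}, together with the super-additivity of the controls $\|X\|_{p'\var;[s,\cdot]}^{p'}$ and $\|Y\|_{q\var;[s,\cdot]}^{q}$, shows that $A$ and $B$ are of finite $r$-variation with $\tfrac{1}{r} = \tfrac{1}{p'} + \tfrac{1}{q} > 1$ and $C$ is of finite $q/2$-variation; since $r < 1$ and $q/2 < 1 < p/2$, all three are of finite $p/2$-variation for every $p \in (2,3)$. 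Hence $(Z,\mathbb{Z}) \in \mathcal{W}^p([0,T];\R^d)$ for every $p \in (2,3)$, $\P$-a.s.

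Finally, to exhibit $\mathbb{Z}$ as a limit of left-point Riemann sums, I would expand $\sum_{[u,v]\in\mathcal{P}^n} Z_u \otimes Z_{u\wedge\cdot,\,v\wedge\cdot}$ along the bilinear structure $Z = X + Y$ into four sums. The $X$-against-$X$ sum converges to $\int_0^\cdot X_{r-}\otimes\dd X_r$ uniformly on compacts in probability as the mesh of $\mathcal{P}^n$ tends to $0$ (approximation of It\^o integrals by Riemann sums, cf.\ \cite{Protter2004}), and each of the other three sums converges to the corresponding Young integral along suitable sequences of partitions (cf.\ \cite{Young1936} and \cite[Proposition~14]{Friz2017}). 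Passing to such a sequence, and then to a subsequence along which the first convergence holds $\P$-a.s., all four converge simultaneously, so $J$ --- hence $\mathbb{Z}$ --- is the $\P$-a.s.\ limit of the left-point Riemann sums $\sum_{[u,v]\in\mathcal{P}^n} Z_u \otimes Z_{u\wedge\cdot,\,v\wedge\cdot} - Z_0\otimes Z_{0,\cdot}$. The main obstacle is the $p/2$-variation of the cross terms $A$ and $B$: this is where it is essential that $q < 2$, which is exactly what permits an auxiliary $p' > 2$ with $\tfrac{1}{p'} + \tfrac{1}{q} > 1$ (so that $A$ and $B$ are, in the rough path sense, of better-than-$2$-variation), and it requires some care with the jumps of $X$ in $A$ and of $Y$ in $B$, handled via $\sum_{s}|\Delta X_s|^{p'} \le \|X\|_{p'\var}^{p'}$, $\sum_{s}|\Delta Y_s|^{q} \le \|Y\|_{q\var}^{q}$ and H\"older's inequality.
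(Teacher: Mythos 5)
Your proposal is correct and follows essentially the same route as the paper: the paper derives the corollary in one line from Proposition~\ref{prop:semimartingales} together with the preceding discussion on Young integration (existence of the Young integrals as limits of left-point Riemann sums and their variation estimates via the c\`adl\`ag Young--Loeve inequality of \cite[Theorem~2]{Friz2017}), which is exactly the decomposition $\mathbb{Z}=\mathbb{X}+A+B+C$ you carry out. Your write-up merely makes explicit what the paper leaves implicit, namely the treatment of the cross terms via an auxiliary $p'>2$ with $1/p'+1/q>1$ and super-additive controls, and the identification of $\mathbb{Z}$ as an almost sure limit of left-point Riemann sums.
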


In the case $Z=X+Y$ for a stochastic process~$Y$ with continuous sample paths of finite $q$-variation with $q\in [1,2)$, the process $Z$ belongs to the class of so-called c\`adl\`ag Dirichlet processes, cf. \cite{Stricker1988} and \cite{Coquet2003}. Furthermore, let us remark that if $Y \in C^{0,2\var}([0,T];\R^d)$ admits a rough path lift, then it has to coincide with the Young integral, cf.~\cite[Exercise~2.12]{Friz2014}. Here $C^{0,2\var}([0,T];\R^d)$ denotes the closure of smooth paths on $[0,T]$ w.r.t. $|\cdot|_{2\var}$.

\subsection{Gaussian processes}

Let $(\Omega, \mathcal{F},\mathbb{P})$ be a probability space with filtration $(\mathcal{F}_t)_{t\in [0,T]}$ satisfying the usual conditions and let $X = (X^1,\dots,X^d)\colon  \Omega \times [0,T] \to \R^d$ be a $d$-dimensional Gaussian process. A natural candidate for the corresponding $\mathbb{X}=(\mathbb{X}^{i,j})_{i,j=1,\dots,d}$ is 
\begin{equation}\label{eq:gaussian lift}
  \mathbb{X}^{i,j}_{s,t} := \int_0^t X^i_{r-} \d X^j_r - \int_0^s X^i_{r-} \d X^j_r - X^i_{s} X^j_{s,t} 
  \quad \text{and} \quad 
  \mathbb{X}^{i,i}_{s,t} := \frac{1}{2}(X^i_{s,t})^2,\quad (s,t)\in \Delta_T,
\end{equation}
where $i\neq j$ and where the integral is given as $L^2$-limit of left-point Riemann-Stieltjes approximations. For more details on Gaussian processes in the context of rough path theory we refer to \cite[Chapter~15]{Friz2010}.

\begin{proposition}\label{prop:gaussian processes}
  Let $(X_t)_{t \in [0,T]}$ be a $d$-dimensional separable centered Gaussian process with independent components and c\`adl\`ag sample paths. If for every $q > 1$ 
  \begin{equation}\label{eq:variation condition for Gaussian process}
    \sup_{\mathcal{P},\mathcal{P}^\prime} \sum_{[s,t]\in \mathcal{P}, [u,v] \in \mathcal{P}^\prime} |\E[X_{s,t} \otimes X_{u,v}]|^{q} < \infty,
  \end{equation}
  then $(X, \mathbb{X})\in \mathcal{W}^{p}([0,T];\R^d)$ for every $p\in (2,3)$ $\P$-almost surely, where $\mathbb{X}$ is defined as in~\eqref{eq:gaussian lift} and $\mathbb{X}^{i,j}$ exists in the sense of an $L^2$-limit of Riemann-Stieltjes approximations for $i \neq j$.
\end{proposition}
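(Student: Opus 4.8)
The plan is to verify the hypotheses of Theorem~\ref{thm:existence rough path} and to treat the diagonal entries of $\mathbb{X}$ by hand. First I would turn the assumption~\eqref{eq:variation condition for Gaussian process} into a usable control: for $q>1$ the quantity $\omega_q(s,t):=\sup_{\mathcal{P},\mathcal{P}'}\sum_{[s',t']\in\mathcal{P},\,[u',v']\in\mathcal{P}'}|\E[X_{s',t'}\otimes X_{u',v'}]|^q$, the supremum being over partitions of $[s,t]$, is a super-additive control, and restricting to the diagonal gives $\E[|X_{s,t}|^2]\le\omega_q(s,t)^{1/q}$. By the equivalence of Gaussian moments this upgrades to $\E[|X_{s,t}|^m]\lesssim_m\omega_q(s,t)^{m/(2q)}$ for every $m\in\N$; since $q$ may be taken arbitrarily close to $1$ and $m$ arbitrarily large, a Kolmogorov-type criterion for $p$-variation, together with the assumed c\`adl\`ag regularity, yields $X\in D^{p\var}([0,T];\R^d)$ $\P$-a.s. for every $p>2$ and moreover $\|X\|_{p\var},\|X\|_\infty\in L^m(\P)$ for all $m$. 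This is exactly the first hypothesis of Theorem~\ref{thm:existence rough path}.

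The diagonal entries need no further input: for $\mathbb{X}^{i,i}_{s,t}=\tfrac12(X^i_{s,t})^2$ Chen's relation is immediate, $t\mapsto\mathbb{X}^{i,i}_{0,t}$ is c\`adl\`ag because $X^i$ is, and $\sum_{[s,t]\in\mathcal{P}}|\mathbb{X}^{i,i}_{s,t}|^{p/2}=2^{-p/2}\sum_{[s,t]\in\mathcal{P}}|X^i_{s,t}|^p\le 2^{-p/2}\|X^i\|_{p\var}^p<\infty$. Hence it remains to handle $\mathbb{X}^{i,j}$ for $i\ne j$, for which I would re-run the argument behind Theorem~\ref{thm:existence rough path}: Lemma~\ref{lem:young estimate} and the proof of the theorem use only $\|X^n-X_-\|_\infty\le 2^{-n}$, the finite $p$-variation of the underlying path(s), the bound $\#\{k:\tau^n_k\in[s,t]\}\lesssim 2^{nq}c(s,t)$, and the estimate~\eqref{eq:assumption}, and all of this transfers verbatim to the cross integral $\int X^i_-\,\dd X^j$ once one fixes a super-additive control $c$ with $|X^i_{s,t}|^q\vee|X^j_{s,t}|^q\le c(s,t)$ (noting $\sum_k|X^i_{\tau^n_k,\tau^n_{k+1}}|^{\bar p}\le\|X^i\|_{\bar p\var}^{\bar p}$). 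So the proposition reduces to showing that the $L^2$-limit defining $\int_0^t X^i_{r-}\,\dd X^j_r$ exists and is $\P$-a.s. c\`adl\`ag in $t$, and that the a.s. estimate~\eqref{eq:assumption} holds for the dyadic approximation~\eqref{eq:approximation} of the cross integral.

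For both points I would exploit the independence of $X^i$ and $X^j$: conditionally on $X^i$, a left-point integral $\int g\,\dd X^j$ is a centred Gaussian with variance $\mathcal{E}_{X^j}(g,g)$, and the two-dimensional Young estimate (cf.~\cite[Chapter~15]{Friz2010}) gives $\mathcal{E}_{X^j}(g,g)\lesssim\|g\|_{\bar q\var}^2\,\|R^j\|_{\bar q'\var;[0,T]^2}$ whenever $1/\bar q+1/\bar q'>1$, where $R^j$ is the covariance of $X^j$. A short computation identifies the difference $\int_0^t (X^n)^i\,\dd X^j-\int_0^t X^i_{r-}\,\dd X^j_r$ with an integral $\int_0^t\check g^n_r\,\dd X^j_r$ of this type, for a process $\check g^n$ satisfying $\|\check g^n\|_\infty\le 2^{-n}$ and $\|\check g^n\|_{\bar p\var}\lesssim\|X^i\|_{\bar p\var}$ for every $\bar p>2$. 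Taking $\bar q'$ close to $1$ (possible by~\eqref{eq:variation condition for Gaussian process}) lets $\bar q$ be as large as we like, and interpolation $\|\check g^n\|_{\bar q\var}\le\|\check g^n\|_{\bar p\var}^{\bar p/\bar q}\|\check g^n\|_\infty^{1-\bar p/\bar q}$, together with $\E[\|X^i\|_{\bar p\var}^m]<\infty$, yields
\begin{equation*}
  \E\Big[\Big|\int_0^t (X^n)^i_r\,\dd X^j_r-\int_0^t X^i_{r-}\,\dd X^j_r\Big|^2\Big]\lesssim 2^{-2n(1-\delta)}
\end{equation*}
with $\delta=\delta(\bar p,\bar q)>0$ that can be made arbitrarily small. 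The same mechanism applied to increments of the integral---which lie in the sum of the first two Wiener chaoses, so that all their $L^m$-norms are controlled by the $L^2$-norm---provides existence of the $L^2$-limit and, via a Borel--Cantelli argument, a c\`adl\`ag version. Finally, fixing $t$ in a countable dense set $D_T\ni T$ and $\varepsilon\in(0,1)\cap\Q$ and choosing $\delta<\varepsilon$, Chebyshev's inequality bounds the probability that the left-hand side above exceeds $2^{-n(1-\varepsilon)}$ by $2^{-2n(\varepsilon-\delta)}$ up to a constant; this is summable, so Borel--Cantelli yields~\eqref{eq:assumption} $\P$-a.s., and intersecting over the countably many pairs $(t,\varepsilon)$ preserves full probability. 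The argument of Theorem~\ref{thm:existence rough path}, now applied entrywise, then gives $\|\mathbb{X}^{i,j}\|_{p/2\var}<\infty$ $\P$-a.s., and together with the trivial diagonal bound this proves $(X,\mathbb{X})\in\mathcal{W}^p([0,T];\R^d)$ for every $p\in(2,3)$ $\P$-almost surely.

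The step I expect to be the real obstacle is the rate~$2^{-2n(1-\delta)}$: estimating $\mathcal{E}_{X^j}(\check g^n,\check g^n)$ only through the $\bar p$-variation of $\check g^n$ produces no decay in $n$ at all, so it is essential to trade the small sup-norm $\|\check g^n\|_\infty\le2^{-n}$ against variation regularity by interpolation, which forces one to work with $\|R^j\|_{\bar q'\var}$ for $\bar q'$ arbitrarily close to $1$---which is precisely what hypothesis~\eqref{eq:variation condition for Gaussian process}, holding for every $q>1$, is tailored to provide. The other point requiring some care is the bookkeeping that identifies the difference of the dyadic and the limiting integral with $\int_0^\cdot\check g^n\,\dd X^j$, the dyadic times in~\eqref{eq:approximation} being those of the $\R^d$-valued path $X$ rather than of $X^i$ alone.
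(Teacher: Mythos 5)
Your verification of the key hypothesis \eqref{eq:assumption} is essentially the paper's own argument: exploit independence of the components, bound the second moment of $\int_0^t\bigl((X^n)^i-X^i_-\bigr)\,\dd X^j$ by a two-parameter Young--Towghi estimate, interpolate so as to trade $\|X^n-X_-\|_\infty\le 2^{-n}$ against variation regularity and obtain a rate $2^{-2n(1-\delta)}$ with $\delta$ arbitrarily small (possible precisely because \eqref{eq:variation condition for Gaussian process} holds for every $q>1$), then apply Chebyshev and Borel--Cantelli at countably many times in $D_T$ and feed the result into Theorem~\ref{thm:existence rough path}. The only cosmetic difference in this step is where the interpolation takes place: the paper interpolates the two-dimensional variation of the covariance $R^n$ of $Y^n=X^n-X_-$, using $\sup|R^n(\cdot)|\lesssim 2^{-2n}$ and \cite[Theorem~3]{Friz2011c}, whereas you interpolate the one-dimensional variation of $\check g^n$ conditionally on $X^i$; both routes also need moments of $\|X\|_{p\var}$, which the paper obtains via \cite[Proposition~1.7]{Basse2016} and Gaussian integrability. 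Your explicit treatment of the diagonal entries $\mathbb{X}^{i,i}$ is correct and is left implicit in the paper.

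Where you genuinely deviate is the surrounding infrastructure, and there the proposal has real gaps. The paper gets both the a.s.\ finite $p$-variation of $X$ and the existence \emph{and c\`adl\`ag sample paths} of $\int_0^\cdot X^i_{r-}\,\dd X^j_r$ from the time-change reduction of \cite[Section~10.3]{Friz2017}: writing $X=\widetilde X\circ F$ with $\widetilde X$ continuous Gaussian, \cite[Theorem~35~(iv)]{Friz2010c} supplies the $L^2$-limit with continuous paths, and composition with the c\`adl\`ag time change yields the c\`adl\`ag integral, so that only \eqref{eq:assumption} remains to be checked. You replace this by an unnamed ``Kolmogorov-type criterion for $p$-variation'' of c\`adl\`ag processes (plausible, but it needs a precise statement, since the classical Kolmogorov criterion excludes jumps and the control $\omega_q$ is only right-continuous super-additive) and, more seriously, by the claim that fixed-$t$ $L^2$ rates, chaos equivalence of moments and Borel--Cantelli produce a c\`adl\`ag version of the limiting integral. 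That does not follow as stated: almost sure convergence at countably many fixed times gives no regularity of the limit, and Theorem~\ref{thm:existence rough path} does require $\int_0^\cdot X_-\otimes\dd X$ to be a c\`adl\`ag function on all of $[0,T]$; you would need a uniform-in-$t$ maximal estimate or a chaining argument, or simply the paper's time-change. Finally, the point you flag but do not resolve --- the dyadic times in \eqref{eq:approximation} are built from the full $\R^d$-valued path, so $X^n-X_-$ is not a functional of $X^i$ alone --- is exactly what both your conditional-variance computation and the paper's factorization of $\E[(\sum Y^n_{t_i}\widetilde X_{t_i,t_{i+1}})^2]$ rely on; it should be handled (for instance by running the approximation componentwise and checking that Lemma~\ref{lem:young estimate} and the proof of Theorem~\ref{thm:existence rough path} go through entrywise) rather than merely noted.
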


\begin{proof}
  Proceeding as in \cite[Section~10.3]{Friz2017}, the sample paths of $X$ have finite $p$-variation for any $p > 2$ due to~\eqref{eq:variation condition for Gaussian process} and there exists a centered Gaussian process $\widetilde{X}$ with continuous sample paths such that $\widetilde{X} \circ F = X$, where $F^i(t):= \sup_{\mathcal{P}} \sum_{[u,v] \in \mathcal{P}} |X^i_u - X^i_v|_{L^2}^{2q}$ for every $i = 1,\dots,d$. 
   
  By \cite[Theorem~35~(iv)]{Friz2010c} the integral $\int_0^\cdot \widetilde{X}^i_r \d \widetilde{X}^j_r$ exists as the $L^2$-limit of Riemann-Stieltjes approximation and has continuous sample paths. Furthermore, using Young-Towghi's maximal inequality (see \cite[Theorem~3]{Friz2011c}) it can be verified that 
  \begin{equation}\label{Gaussian integral}
    \int_0^\cdot \widetilde{X}^i_r \d \widetilde{X}^j_r \circ F(t) = \lim_{|\mathcal{P}| \rightarrow 0}\sum_{[u,v] \in \mathcal{P}} X^i_{u}(X^j_{v\wedge \cdot} - X^j_{u\wedge \cdot}) = \lim_{|\mathcal{P}| \rightarrow 0}\sum_{[u,v] \in \mathcal{P}} X^i_{u-}(X^j_{v\wedge \cdot} - X^j_{u\wedge \cdot}),
  \end{equation}
  for $i,j=1,\dots,d$ with $i\neq j$, where the limits are taken in $L^2$ and in Refinement Riemann-Stieltjes sense (cf. \cite[Definition~1]{Friz2017}). We denote by $\int_0^{\cdot} X^i_{r-} \d X^j_{r}$ the integral from~\eqref{Gaussian integral}, which has c\`adl\`ag sample paths. 
  
  It remains check condition~\eqref{eq:assumption} for $\int_0^\cdot X^i_{r-} \d X^j_r$, which then implies the proposition by Theorem~\ref{thm:existence rough path}. With an abuse of notation, we write now $X$ for $X^i$ and $\widetilde{X}$ for $X^j$. Let $X^n$ be given as in~\eqref{eq:approximation} such that $\|X^n -  X_{-}\|_{\infty} \leq 2^{-n}$. We define for $Y^n := X^n - X_{-}$ and for $s,u \in [0,T]$ we set $R^{n}\binom{s}{u} := \E[Y^n_{0,s}Y^n_{0,u}]$ and $\widetilde{R}\binom{s}{u} := \E[\widetilde{X}_{0,s}\widetilde{X}_{0,u}]$.
  Thanks to \eqref{eq:variation condition for Gaussian process}, $\widetilde{R}$ has finite $q$-variation for any $q > 1$. We claim that $R^{n}$ has finite $p$-variation for any $p > 2$. Indeed, for every rectangle $[s,t] \times [u,v] \subset [0,T]^2$,  we have $|\E[Y^n_{s,t}Y^n_{u,v}]|^p \leq \|Y^n_{s,t}\|_{L^2}^p \|Y^n_{u,v}\|_{L^2}^p$. Using \cite[Proposition~1.7]{Basse2016} and the definition of $X^n$ we obtain that $\E[\|Y^n\|_{p\var}^p] \lesssim \E[\|X\|_{p\var}^p] < \infty$. Then, by Jensen's inequality we deduce that
  \begin{align*}  
    \E[\|Y^n\|_{p\var}^p] \geq \sup_{\mathcal{P}} \sum_{[s,t] \in \mathcal{P}}\E[|Y^n_{s,t}|^p] 
    \geq \sup_{\mathcal{P}} \sum_{[s,t] \in \mathcal{P}}(\E[|Y^n_{s,t}|^2])^{p/2} 
    = \sup_{\mathcal{P}}\sum_{[s,t] \in \mathcal{P}}\|Y^n_{s,t}\|_{L^2}^p,
  \end{align*}
  which means that $Y^n$ has finite $p$-variation w.r.t. the $L^2$-distance. Let $|||Y^n|||_{p\var}$ denote the $p$-variation norm of $Y^n$ in the $L^2$-distance, then $c_n(s,t):= |||Y^n|||_{p\var;[s,t]}^p$ is super-additive and $c_n(0,T) \lesssim \E[\|X\|_{p\var}^p]$ for all $n$. Hence, for any partitions $\mathcal{P}$, $\mathcal{P}^\prime$ of $[0,T]$ and for 
  \begin{equation*}
    R^{n}\binom{s,t}{u,v} := R^{n}\binom{s}{u} + R^{n}\binom{t}{v}- R^{n}\binom{s}{v}-R^{n}\binom{t}{u}, \quad u,v,s,t\in [0,T],
  \end{equation*}
  it holds that
  \begin{align*}
    \sum_{[s,t] \in \mathcal{P}, [u,v]\in \mathcal{P}^\prime}R^{n}\binom{s,t}{u,v}^p 
    \leq \sum_{[s,t] \in \mathcal{P}, [u,v]\in \mathcal{P}^\prime} c_n(s,t)c_n(u,v)
    \leq c_n(0,T)^2 \lesssim \E[\|X\|_{p\var}^p]^2.
  \end{align*}
  Now, for any $p > 2$, we can choose any $q > 1$ close to $1$ enough such that $1/p + 1/q > 1$. Since $Y^n$ and $\widetilde{X}$ are independent, applying Young-Towghi's maximal inequality  to the discrete integrals $\E[(\sum_{t_i \in \mathcal{P}} Y^n_{t_i}\widetilde{X}_{t_i,t_{i+1}})^2]$  and then sending $|\mathcal{P}|$ to zero, by Fatou's lemma we obtain that
  \begin{equation*}
    \E\bigg[\bigg(\int_0^t Y^n_{0,r} \d \widetilde{X}_r\bigg)^2\bigg] \lesssim V_p(R^n)V_q(\widetilde{R}), \quad t \in [0,T],
  \end{equation*}
  where $V_p$ denotes $p$-variation on $[0,T]^2$  in the sense of \cite[Definition~1]{Friz2011c}, given by
  \begin{equation*}
    V_p(R) := \sup_{\mathcal{P},\mathcal{P}^\prime}\bigg( \sum_{[s,t]\in \mathcal{P},[u,v]\in \mathcal{P}^\prime} R\binom{s,t}{u,v}^p \bigg)^{1/p}
  \end{equation*}
  for a function $R \colon [0,T]^2 \to \R$. By an interpolation argument we have for $p^\prime > p$, 
  \begin{equation*} 
    V_{p^\prime}(R^n) \leq V_p(R^n)^{p/p^\prime}\bigg(\sup_{s\neq t,u\neq v}\bigg|R^n\binom{s,t}{u,v}\bigg|\bigg)^{1-p/p^\prime}.  
  \end{equation*}
  Hence, noting that $|R^n\binom{s,t}{u,v}| = |\E[Y^n_{s,t}Y^n_{u,v}]| \lesssim 2^{-2n}$ due to $\|Y^n\|_\infty \leq 2^{-n}$, the above inequality applied for $p^\prime$ and $q$ with $1/p^\prime + 1/q > 1$ gives
  \begin{equation*}
    \E\bigg[\bigg(\int_0^t Y^n_{0,r} \dd \widetilde{X}_r\bigg)^2\bigg] \lesssim V_p(R^n)^{p/p^\prime}V_q(\widetilde{R})2^{-2n(1-p/p^\prime)}.
  \end{equation*}
  In particular, for a given $p>2$ and $\varepsilon > 0$, we choose $p^\prime = p/\varepsilon$, and a corresponding parameter $q$ close to $1$ enough such that $1/p^\prime + 1/q > 1$, we get
  \begin{equation*}
    \E\bigg[\bigg(\int_0^t Y^n_{0,r} \dd \widetilde{X}_r\bigg)^2\bigg] \lesssim_{\varepsilon} 2^{-2n(1-\varepsilon)}.
  \end{equation*}
  Then by Chebyshev's inequality, for each $n$ and each $t \in [0,T]$ we have (note that $Y^n_0 = 0$)
  \begin{equation*}
    \P\bigg(\bigg|\int_0^t Y^n_{r} \dd \widetilde{X}_r\bigg| \geq 2^{-n(1-2\varepsilon)}\bigg)
    \leq 2^{2n(1-2\varepsilon)}\E\bigg[\bigg(\int_0^t Y^n_{0,r} \dd \widetilde{X}_r\bigg)^2\bigg] 
    \lesssim_{\varepsilon} 2^{-2n\varepsilon}.
  \end{equation*}
  Since the right-hand side of the above inequality is summable over $n \in \N$, by the Borel-Cantelli lemma, we conclude that for every $t \in [0,T]$ there exists a $\Omega_t \subset \Omega$ with $\P[\Omega_t]=1$ such that for every $\omega \in \Omega_t$, when $n$ large enough ($n$ may depend on $\omega$ and $t$),
  \begin{equation*}
    \left|\int_0^t (X^n - X_{r-}) \dd \widetilde{X}_r\right| \leq 2^{-n(1-\varepsilon)}
  \end{equation*}
  holds. Let $D_T$ be any countable dense subset in $[0,T]$ containing $T$ and $\widetilde{\Omega}:= \bigcap_{t \in D_T} \Omega_t$. 
  Therefore, condition~\eqref{eq:assumption} is satisfied for every $\omega \in \widetilde{\Omega}$, which finishes the proof.
\end{proof}

\begin{remark}
  The Gaussian rough path as constructed in Proposition~\ref{prop:gaussian processes} is in fact a weakly geometric c\`adl\`ag rough path, which coincides with the one given in \cite[Theorem~60]{Friz2017}. However, while the proof of  \cite[Theorem~60]{Friz2017} is entirely based on time-change arguments and on corresponding well-known results for continuous Gaussian rough paths, the above proof gives a direct verification of the required rough path regularity via Theorem~\ref{thm:existence rough path}.
\end{remark}

\subsection{Typical price paths}\label{subsec:typical price paths} 

In recent years, initiated by Vovk, a model-free, hedging-based approach to mathematical finance emerged that uses arbitrage considerations to investigate which sample path properties are satisfied by ``typical price paths'', see for instance \cite{Vovk2008,Takeuchi2009,Perkowski2016}. In particular, Vovk's framework allows for setting up a model-free It\^o integration, see~\cite{Perkowski2016,Lochowski2016,Vovk2016}. Based on this integration, we show in the present subsection that ``typical price paths'' can be lifted to c\`adl\`ag rough paths.
\smallskip

Let $\Omega_+:=D([0,T];\R^d_+)$ be the space of all non-negative c\`adl\`ag functions $\omega\colon [0,T]\to\R^d_+$. The space $\Omega_+$ can be interpreted as all possible price trajectories on a financial market. For each $t\in [0,T]$, ${\mathcal{F}}_{t}^{\circ}$ is defined to be the smallest $\sigma$-algebra on~$\Omega_+$ that makes all functions $\omega\mapsto\omega(s)$, $s\in[0,t]$, measurable and ${\mathcal{F}}_{t}$ is defined to be the universal completion of ${\mathcal{F}}_{t}^{\circ}$. Stopping times $\tau\colon\Omega_+\to [0,T]\cup \{ \infty \} $ w.r.t. the filtration $({\mathcal{F}}_{t})_{t\in[0,T]}$ and the corresponding $\sigma$-algebras ${\mathcal{F}}_{\tau}$ are defined as usual. The coordinate process on $\Omega_+$ is denoted by $S=(S^1,\dots,S^d)$, i.e. $S_{t}(\omega):=\omega(t)$ and $S^i_{t}(\omega):=\omega^i(t)$ for $\omega=(\omega^1,\dots,\omega^d) \in \Omega_+$, $t\in [0,T]$ and $i=1,\dots,d$. 
\smallskip

A process $H\colon \Omega_+ \times [0,T]\to\R^d$ is a \emph{simple (trading) strategy} if there exist a sequence of stopping times $0 = \sigma_0 < \sigma_1 <  \sigma_2 < \dots$ such that for every $\omega\in\Omega_+$ there exist an $N(\omega)\in \N$ such that $\sigma_{n}(\omega)=\sigma_{n+1}(\omega)$ for all $n\geq N(\omega)$, and a sequence of $\F_{\sigma_n}$-measurable bounded functions $h_n\colon \Omega_+\to \R^d$, such that $H_t(\omega) = \sum_{n=0}^\infty h_n(\omega) \1_{(\sigma_n(\omega),\sigma_{n+1}(\omega)]}(t)$ for $t \in [0,T]$. Therefore, for a simple strategy $H$ the corresponding integral process
\begin{equation*}
  (H \cdot S)_t(\omega) :=  \sum_{n=0}^\infty h_n(\omega) S_{\sigma_n\wedge t, \sigma_{n+1} \wedge t}(\omega) 
\end{equation*}
is well-defined for all $(t,\omega) \in [0,T]\times \Omega_+$. For $\lambda > 0$ we write $\mathcal{H}_{\lambda}$ for the set of all simple strategies $H$ such that $(H\cdot S)_t(\omega) \geq - \lambda $ for all $(t,\omega) \in [0,T]\times \Omega_+$.

\begin{definition}
  \emph{Vovk's outer measure} $\overline{P}$ of a set $A \subset \Omega_+$ is defined as the minimal superhedging price for $\1_A$, that is
  \begin{equation*}
    \overline{P}(A) := \inf\Big\{\lambda > 0\,: \,\exists (H^n)_{n\in \N} \subset \mathcal{H}_{\lambda} \text{ s.t. } \forall \omega \in \Omega_+\,: \, \liminf_{n \to\infty} (\lambda + (H^n\cdot S)_T(\omega)) \ge \1_A(\omega)\ \Big\}.
  \end{equation*}
  A set~$\mathcal{A} \subset \Omega_+$ is called a \emph{null set} if it has outer measure zero. A property (P) holds for \emph{typical price paths} if the set ~$\mathcal{A}$ where (P) is violated is a null set.
\end{definition} 

Note that $\overline{P}$ is indeed an outer measure, which dominates all local martingale measures on the space $\Omega_+$, see \cite[Lemma~2.3 and Proposition~2.5]{Lochowski2016}. For more details about Vovk's outer measure we refer for example to \cite[Section~2]{Lochowski2016}.

\begin{proposition}\label{prop:typical price paths}
  Typical price paths belonging to $\Omega_+$ can be enhanced to c\`adl\`ag rough paths $(S,A)\in \mathcal{W}^p([0,T];\R^d)$ for every $p>2$ where
  \begin{equation*}
    A_{s,t} := \int_0^t S_{r-}\otimes \dd S_r -\int_0^s S_{r-}\otimes \dd S_r - S_s\otimes S_{s,t}, \quad (s,t)\in \Delta_T,
  \end{equation*}
  and $\int S_{-}\otimes \dd S $ denotes the model-free It\^o integral from \cite[Theorem~4.2]{Lochowski2016}.
\end{proposition}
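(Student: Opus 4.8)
The plan is to obtain the claim as a consequence of the deterministic criterion of Theorem~\ref{thm:existence rough path}, applied pathwise to typical price paths $\omega\in\Omega_+$. Three things have to be verified for typical $\omega$: that $S(\omega)$ lies in $D^{p\var}([0,T];\R^d)$ for every $p>2$; that the model-free It\^o integral $\int_0^\cdot S_{r-}\otimes\dd S_r$ of \cite[Theorem~4.2]{Lochowski2016} is well-defined with c\`adl\`ag sample paths; and that the dyadic approximations $\int_0^t S^n_r\otimes\dd S_r$ from~\eqref{eq:approximation} satisfy~\eqref{eq:assumption}. The first point is the well-known fact that typical price paths have finite $p$-variation for every $p>2$ (and they are c\`adl\`ag by definition of $\Omega_+$), see \cite{Vovk2008,Lochowski2016}; the second is precisely \cite[Theorem~4.2]{Lochowski2016}, whose integral is built along exactly the dyadic level-crossing times $(\tau^n_k)$ used in~\eqref{eq:approximation}. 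So the actual content of the proof is~\eqref{eq:assumption}.

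For~\eqref{eq:assumption} I would exploit the telescoping identity $\int_0^t S^n_r\otimes\dd S_r - \int_0^t S_{r-}\otimes\dd S_r = \sum_{k\ge n}\big(\int_0^t S^k_r\otimes\dd S_r - \int_0^t S^{k+1}_r\otimes\dd S_r\big)$, valid for typical price paths since the partial sums converge uniformly by \cite[Theorem~4.2]{Lochowski2016}. Each summand is the model-free integral of the simple adapted process $S^k-S^{k+1}$, bounded by $\|S^k-S_{-}\|_\infty+\|S^{k+1}-S_{-}\|_\infty\le 2^{1-k}$. Exactly as the Burkholder--Davis--Gundy estimate is used in the semimartingale case (Proposition~\ref{prop:semimartingales}), I would combine a model-free Doob $L^2$-maximal inequality for Vovk's outer measure $\overline{P}$ (which dominates all local martingale measures, cf.\ \cite[Proposition~2.5]{Lochowski2016}) with a model-free It\^o isometry to bound, for any $\lambda>0$ and any $K\in\N$,
\begin{equation*}
  \overline{P}\Big(\Big\{\sup_{t\in[0,T]}\Big|\int_0^t S^k_r\otimes\dd S_r - \int_0^t S^{k+1}_r\otimes\dd S_r\Big|\ge\lambda\Big\}\cap\{[S]_T\le K\}\Big)\lesssim \lambda^{-2}\,2^{-2k}\,K,
\end{equation*}
where $[S]$ denotes the (model-free) quadratic variation of $S$, which is finite for typical price paths. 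Choosing $\lambda=\lambda_k:=2^{-k(1-\varepsilon/2)}$ turns the right-hand side into a constant times $K\,2^{-k\varepsilon}$, which is summable in $k$ for each fixed $K$. A Borel--Cantelli argument for the countably subadditive outer measure $\overline{P}$ then shows that, for each $K$, the set of $\omega$ for which $\sup_t|\int_0^t S^k\otimes\dd S-\int_0^t S^{k+1}\otimes\dd S|\ge\lambda_k$ and $[S(\omega)]_T\le K$ both hold for infinitely many $k$ is $\overline{P}$-null; taking the countable union over $K\in\N$ and intersecting with the typical set $\{[S]_T<\infty\}$, we conclude that for typical price paths $\sup_t|\int_0^t S^k\otimes\dd S-\int_0^t S^{k+1}\otimes\dd S|\le\lambda_k$ for all large $k$, whence $\sup_t|\int_0^t S^n\otimes\dd S-\int_0^t S_{r-}\otimes\dd S|\le\sum_{k\ge n}\lambda_k\lesssim 2^{-n(1-\varepsilon/2)}\le 2^{-n(1-\varepsilon)}$ for $n$ large. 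This is~\eqref{eq:assumption}, in fact uniformly in $t$, so that one may even take $D_T=[0,T]$.

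With all three hypotheses of Theorem~\ref{thm:existence rough path} verified for typical price paths, that theorem produces $(S,A)\in\mathcal{W}^p([0,T];\R^d)$ for every $p\in(2,3)$, with $A$ exactly the two-parameter object $\mathbb{X}$ defined there from $\int S_{-}\otimes\dd S$, which is the assertion. I expect the genuine obstacle to be the displayed outer-measure estimate: \cite[Theorem~4.2]{Lochowski2016} a priori only yields bare convergence of the dyadic Riemann sums for typical price paths, so one has to revisit the superhedging construction and, for each $k$, exhibit a simple trading strategy superhedging the indicator of $\{\sup_t|\int_0^t S^k\otimes\dd S-\int_0^t S^{k+1}\otimes\dd S|\ge\lambda_k\}\cap\{[S]_T\le K\}$ at cost of order $K\,2^{-k\varepsilon}$; the bookkeeping is slightly delicate because of the localisation on bounded quadratic variation, but, as in the semimartingale and Gaussian cases, the $\varepsilon$-room is ample.
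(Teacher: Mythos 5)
Your overall skeleton is the same as the paper's: reduce to Theorem~\ref{thm:existence rough path}, use Vovk's result that typical price paths have finite $p$-variation for all $p>2$, and verify~\eqref{eq:assumption} by an outer-measure concentration estimate plus a Borel--Cantelli argument for $\overline{P}$. The gap is precisely the step you yourself flag as the ``genuine obstacle'': your displayed estimate
$\overline{P}\big(\{\sup_t|\int_0^t (S^k-S^{k+1})\otimes\dd S|\ge\lambda\}\cap\{[S]_T\le K\}\big)\lesssim\lambda^{-2}2^{-2k}K$
rests on a ``model-free Doob $L^2$-maximal inequality'' combined with a ``model-free It\^o isometry'', and neither exists in the cited framework: $\overline{P}$ is a superhedging outer measure, not a probability, and the estimates actually available in \cite{Lochowski2016} are weak-type bounds that are \emph{linear} in $\|H\|_\infty/\lambda$, not quadratic. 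As written, the central quantitative ingredient of your proof is conjectured rather than proved, so the argument is incomplete.

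What the paper does instead is both simpler and does not require any new superhedging construction: it invokes (a straightforward extension of) \cite[Corollary~4.9]{Lochowski2016}, applied directly to $\int_0^\cdot(S^n-S_-)\otimes\dd S$ without any telescoping, which yields
$\overline{P}\big(\{\|\int_0^\cdot(S^n-S_-)\otimes\dd S\|_\infty\ge a_n\}\cap\{|[S]_T|\le b\}\cap\{\|S\|_\infty\le b\}\big)\lesssim(\sqrt{b}+2+2b)\,c_n/a_n$
with $c_n=\|S^n-S\|_\infty\lesssim 2^{-n}$; choosing $a_n=2^{-(1-\varepsilon)n}$ makes the right-hand side of order $2^{-n\varepsilon}$, summable, and Borel--Cantelli for $\overline{P}$ (\cite[Lemma~A.1]{Lochowski2016}) together with countable subadditivity over $b$ gives~\eqref{eq:assumption} uniformly in $t$. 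Note two further points: the localisation must include $\{\|S\|_\infty\le b\}$ in addition to the quadratic-variation bound, since the superhedging cost in \cite{Lochowski2016} depends on the price level; and your telescoping decomposition, while harmless, is unnecessary --- if you replace your $L^2$-type bound by the weak-type estimate that is actually available (applied either to each level $k$ with $\lambda_k=2^{-k(1-\varepsilon/2)}$, or directly to $S^n-S_-$), the rest of your Borel--Cantelli argument goes through and your proof closes along essentially the paper's lines.
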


\begin{proof}
  It follows from \cite[Theorem~1]{Vovk2011} that typical price paths belonging to $\Omega_+$ are of finite $p$-variation for every $p>2$. Hence, it remains to check condition~\eqref{eq:assumption} of Theorem~\ref{thm:existence rough path} to prove the assertion. 
  
  Let $S^n$ be the dyadic approximation of $S$ as defined in~\eqref{eq:approximation} for $n\in \N$  and let us recall that \cite[Corollary~4.9]{Lochowski2016} extends to the estimate 
  \begin{equation*}
    \overline{P} \bigg(\bigg \{\bigg \| \int_0^{\cdot} (S^n-S_{-})\otimes \dd S\bigg\|_\infty \geq a_n \bigg\} \cap \{|[S]_T|\leq b\} \cap \{\|S\|_\infty\leq b\} \bigg )
    \lesssim  6 (\sqrt{b} +2 +2b) \frac{c_n}{a_n}
  \end{equation*}
  where $c_n:= \| S^n - S\|_\infty \lesssim 2^{-n}$, $|[S]_T| := \big(\sum_{i,j=1}^d [S^i,S^j]_T^2\big)^{1/2}$ and $[S^i,S^j]$ denotes the quadratic co-variation as defined in \cite[Corollary~3.11]{Lochowski2016}. Due to the countable subadditivity of $\overline{P}$, it is enough to consider a fixed $b>0$. Setting $a_n:=2^{-(1-\varepsilon)n}$ for $\varepsilon \in (0,1)$ and applying the Borel-Cantelli lemma for $\overline{P}$ (see~\cite[Lemma~A.1]{Lochowski2016}), we get $\overline{P}(B_b)=0$ with
  \begin{align*}
    &B_b := \bigcap_{m\in \N} \bigcup_{n\geq m} A_{b,n} \quad \text{and}\quad\\
    &A_{b,n} :=  \bigg\{\bigg\| \int_0^{\cdot} (S^n-S_{-})\otimes \dd S\bigg\|_\infty \geq a_n\bigg \} \cap \{|[S]_T|\leq b\}\cap \{\|S\|_\infty\leq b\}.
  \end{align*}
  In particular, for typical price paths (belonging to $\Omega_+$) we have
  \begin{equation*}
    \left \| \int_0^{\cdot} (S^n-S_{-})\otimes \dd S \right\|_\infty \lesssim_\omega2^{-(1-\varepsilon)n}
  \end{equation*}
  for all $n\in \N$ and thus typical price paths satisfy condition~\eqref{eq:assumption}.
\end{proof}

Let us briefly comment on various aspects of Proposition~\ref{prop:typical price paths}.
\medskip

\begin{remark}~
  \begin{enumerate}[leftmargin=0.8cm]
    \item Proposition~\ref{prop:typical price paths} implies the (robust) existence of It\^o c\`adl\`ag rough paths in the sense that the set of all non-negative c\`adl\`ag paths which do not possess an It\^o rough path has measure zero with respect to all local martingale measures on $\Omega_+$. This justifies to take the existence of It\^o rough paths above price paths as an underlying assumption in model-free financial mathematics.  
    \item The non-existence of It\^o c\`adl\`ag rough paths above non-negative price paths leads to an pathwise arbitrage of the first kind, cf.~\cite[Proposition~2.6]{Lochowski2016}.
    \item In the case of continuous (price) paths the assertion of Proposition~\ref{prop:typical price paths} was obtained in~\cite[Theorem~4.12]{Perkowski2016}. 
    \item Proposition~\ref{prop:typical price paths} can be generalized in a straightforward manner from $\Omega_+$ to the more general sample spaces considered in~\cite{Lochowski2016}.
  \end{enumerate}
\end{remark}


\providecommand{\bysame}{\leavevmode\hbox to3em{\hrulefill}\thinspace}
\providecommand{\MR}{\relax\ifhmode\unskip\space\fi MR }
\providecommand{\MRhref}[2]{%
  \href{http://www.ams.org/mathscinet-getitem?mr=#1}{#2}
}
\providecommand{\href}[2]{#2}

\end{document}